\documentclass[12pt]{article}
\usepackage{amsthm,amsmath,amssymb}
\usepackage{fullpage}
\usepackage{float}
\usepackage{tikz}
\usetikzlibrary{calc}
\usepackage[hidelinks]{hyperref}
\hypersetup{
	pdftitle={Second largest eigenvalue thresholds for the vertex-connectivity of regular graphs},
	pdfauthor={Suil O and Lukai Sui}
}

\newtheorem{thm}{Theorem}[section]
\newtheorem{lem}[thm]{Lemma}
\newtheorem{prop}[thm]{Proposition}
\newtheorem{cor}[thm]{Corollary}

\theoremstyle{remark}
\newtheorem{rem}[thm]{Remark}
\theoremstyle{plain}

\newcommand{\Qt}{\widetilde{Q}}
\newcommand{\psym}{\psi}

\tikzset{
	blob/.style={draw=black!55,dashed,rounded corners=10pt,line width=0.8pt},
	clq/.style={draw=black!55,fill=black!5,line width=0.8pt},
	vtx/.style={circle,draw=black,fill=black!10,minimum size=5mm,inner sep=0pt,font=\scriptsize},
	cut/.style={circle,draw=black,fill=black!25,minimum size=5.5mm,inner sep=0pt,font=\scriptsize},
	ce/.style={black!40,line width=0.4pt},
	se/.style={black!75,line width=0.8pt},
	lbl/.style={font=\footnotesize},
	tag/.style={font=\scriptsize,black!70}
}

\begin{document}
	
	\title{Second largest eigenvalue thresholds for the vertex-connectivity of
		regular graphs}
	\author{%
		Suil O$^{*,\dagger}$
		\and
		Lukai Sui$^{*}$
	}
	
	\maketitle
	
	{\renewcommand{\thefootnote}{\fnsymbol{footnote}}%
		\footnotetext[1]{Department of Applied Mathematics and Statistics, The State University of New York, Korea, Incheon, 21985. Email addresses: suil.o@sunykorea.ac.kr (S. O), lukai.sui@stonybrook.edu (L. Sui).}%
		\footnotetext[2]{Corresponding author. Research supported by the National Research Foundation of Korea (NRF) grant funded by the Korea government (MSIT) No. RS-2025-23523950.}}%
	\setcounter{footnote}{0}
	
	\begin{abstract}
		\noindent
		Let $\lambda_2(G)$ and $\kappa(G)$ denote the second largest adjacency eigenvalue and the
		vertex-connectivity of a graph $G$. For a non-complete $r$-regular graph $G$, Fiedler's
		inequality $\lambda_2(G)\ge r-\kappa(G)$ shows that $\lambda_2(G)<r-t$ forces
		$\kappa(G)\ge t+1$. The best such bound is known for $t=1$, by Zhang and Wang, and for
		$t=2$ with $r\ge4$, by Zhang. In this
		paper, for every $r\ge3$ and every integer $t$ with $2\le t\le r-1$, we give an upper bound
		on $\lambda_2(G)$ that forces an $r$-regular graph $G$ to satisfy $\kappa(G)\ge t+1$, and we
		determine exactly when this bound is best possible.
		For $\frac{r+2}{2}\le t\le r-1$, Fiedler's bound $r-t$ is already best possible. For
		$2\le t<\frac{r+2}{2}$, it is replaced by one of two explicit algebraic numbers, and the sign
		of an explicit polynomial $W(r,t)$, which never vanishes, decides which one applies. Each of
		the two numbers is attained exactly when $(r,t)$ satisfies an explicit arithmetic condition,
		and every extremal graph then consists of two cliques and an independent cut of size $t$.
		In this range our bound also improves a bound of Zhang that holds for every such $t$.
		
		\medskip
		\noindent
		\textbf{Keywords:} second largest eigenvalue, vertex-connectivity, regular graph,
		algebraic connectivity, eigenvalue interlacing
		
		\medskip
		\noindent
		\textbf{AMS subject classification 2020:} 05C50, 05C40
	\end{abstract}
	
	\section{Introduction}
	
	All graphs in this paper are finite and simple. For a graph $G$ we write $A(G)$ for its
	adjacency matrix and $\lambda_1(G)\ge\lambda_2(G)\ge\cdots\ge\lambda_{|V(G)|}(G)$ for its
	eigenvalues, and we write $\lambda_{\min}(G)=\lambda_{|V(G)|}(G)$. The Laplacian matrix
	of $G$ is
	$L(G)=D(G)-A(G)$, where $D(G)$ is the diagonal matrix of degrees, and its eigenvalues are
	denoted $\mu_1(G)\le\mu_2(G)\le\cdots$. If $G$ is $r$-regular, then
	$\mu_i(G)=r-\lambda_i(G)$ for every $i$. For $S\subseteq V(G)$ we write $G[S]$ for the
	subgraph induced by $S$ and $G-S$ for the subgraph obtained by deleting the vertices of
	$S$; for disjoint $S,T\subseteq V(G)$ we write $e(S,T)$ for the number of edges joining
	$S$ and $T$. A graph $G$ is \emph{$t$-vertex-connected} if $|V(G)|>t$ and $G-S$ is
	connected for every $S\subseteq V(G)$ with $|S|<t$, and the \emph{vertex-connectivity}
	$\kappa(G)$ is the largest such $t$. The \emph{join} $G\vee H$ is obtained from disjoint
	copies of $G$ and $H$ by joining every vertex of $G$ to every vertex of $H$, and $G\cup H$
	denotes the disjoint union.
	
	In 1973, Fiedler~\cite{F} proved that $\mu_2(G)\le\kappa(G)$ for every non-complete graph
	$G$, and he called $\mu_2(G)$ the \emph{algebraic connectivity} of $G$. For a non-complete
	$r$-regular graph this reads
	\begin{equation}\label{eq:fiedler}
		\lambda_2(G)\;\ge\;r-\kappa(G).
	\end{equation}
	The only complete $r$-regular graph is $K_{r+1}$, for which $\lambda_2=-1$ and
	$\kappa=r$.
	Fiedler's inequality has stimulated a great deal of work. Kirkland, Molitierno, Neumann
	and Shader~\cite{KMNS} characterized the graphs attaining equality in
	$\mu_2(G)\le\kappa(G)$, and the first author~\cite{O2} extended Fiedler's bound to
	multigraphs and showed that, unlike in the simple case, no upper bound on $\lambda_2(G)$
	forces a regular multigraph $G$ to be $3$-vertex-connected.
	
	The edge analogue of this circle of problems is by now well understood. Writing
	$\kappa'(G)$ for the edge-connectivity, Cioab\u{a}~\cite{C} gave sharp upper bounds on
	$\lambda_2(G)$ in an $r$-regular graph forcing $\kappa'(G)\ge t+1$ for $t=1,2$, and the
	question was settled for all $t\ge3$ by the first author, Park, Park and Zhang~\cite{OPPZ},
	where a single closed form, with two cases according to the parity of $t$, serves every
	$t\ge3$ and is attained by a single family of graphs. The present paper treats the vertex-connectivity version of~\cite{OPPZ}, about which
	much less is known. The question of determining, for each $t$, the best upper bound on $\lambda_2(G)$ forcing
	$\kappa(G)\ge t+1$ was raised by Mohar and the first author, and also by
	Cioab\u{a}, in private communication, and it is recorded in~\cite{ZW},
	where Zhang and Wang solved the first case $t=1$ and characterized all extremal graphs.
	Zhang~\cite{Z} solved the case $t=2$ for $r\ge4$, and in the same paper showed that
	\begin{equation}\label{eq:zhang}
		\zeta(r,t)=r-\frac{t}{2}\cdot\frac{r+2}{r+3-t}
	\end{equation}
	is a threshold for every $t$ with $t<\frac{r+2}{2}$. Abiad, Brimkov,
	Mart\'{\i}nez-Rivera, the first author and Zhang~\cite{ABMOZ} gave a bound depending on
	the order of $G$.
	
	We use the following terminology throughout. Let $r\ge3$ and let $t$ be an integer with
	$1\le t\le r-1$; values $t\ge r$ are excluded because $\kappa(G)\le r$ for every
	$r$-regular graph $G$. A real number $c$ is a \emph{threshold} for $(r,t)$ if every
	connected $r$-regular graph $G$ with $\lambda_2(G)<c$ satisfies $\kappa(G)\ge t+1$, and
	such a $c$ is \emph{sharp} if in addition some connected $r$-regular graph $G$ with
	$\kappa(G)\le t$ satisfies $\lambda_2(G)=c$; a sharp threshold cannot be replaced by any
	larger number, and we call a graph witnessing its sharpness \emph{extremal}.
	Inequality~\eqref{eq:fiedler} says that $r-t$ is a threshold for every such $t$: a
	connected $r$-regular graph $G$ with $\kappa(G)\le t\le r-1$ is not complete, so it
	satisfies $\lambda_2(G)\ge r-\kappa(G)\ge r-t$. By~\cite{Z,ZW}, a sharp threshold is
	known for $t=1$ and for $t=2$ with $r\ge4$.
	
	Our main result gives a threshold for every pair $(r,t)$ with $t\ge2$ and decides exactly
	when it is sharp. For $t<\frac{r+2}{2}$ it involves two explicit numbers. Let
	\begin{equation}\label{eq:psi}
		\psym(r,t)=\frac{r-1+\sqrt{(r+1)^2-2rt}}{2},
	\end{equation}
	and let $\psi_{\mathrm{bd}}(r,t)$ be the largest root of
	\begin{equation}\label{eq:psibd}
		x^{2}+(t-\eta_0)x-\eta_0-(t-1)(r-t)=0,\qquad
		\eta_0=\frac{r-1+\sqrt{(r+1)^{2}-4t(t-1)}}{2}.
	\end{equation}
	Which of the two applies is decided by the sign of the polynomial
	\begin{equation}\label{eq:W}
		\begin{aligned}
			W(r,t)=\;&r^{3}\bigl(12t^{2}-20t+8\bigr)+r^{2}\bigl(-33t^{3}+88t^{2}-80t+24\bigr)\\
			&+r\bigl(12t^{4}-56t^{3}+104t^{2}-84t+24\bigr)-4t^{5}+16t^{4}-32t^{3}+40t^{2}-28t+8 .
		\end{aligned}
	\end{equation}
	
	\begin{thm}\label{thm:main}
		Let $r\ge3$ and let $t$ be an integer with $2\le t\le r-1$.
		\begin{enumerate}
			\item[$(i)$] If $\frac{r+2}{2}\le t\le r-1$, then $r-t$ is a sharp threshold for $(r,t)$.
			
			\item[$(ii)$] If $t<\frac{r+2}{2}$ and $W(r,t)>0$, then every connected $r$-regular graph
			$G$ with $\kappa(G)\le t$ satisfies
			\begin{equation}\label{eq:main}
				\lambda_2(G)\;\ge\;\psym(r,t),
			\end{equation}
			so $\psym(r,t)$ is a threshold for $(r,t)$. It is sharp if and only if $r$ is even and
			there is an integer $k$ with $1\le k\le\frac r2$ such that
			\[
			r\mid 2k(k-1)\qquad\text{and}\qquad t=\frac{2k(r+1-k)}{r} .
			\]
			In that case every extremal graph consists of two cliques of order $r+1-k$ and an
			independent cut of size $t$ each of whose vertices has $\frac r2$ neighbors in each
			clique; it has $2(r+1-k)+t$ vertices and $\lambda_2=r-k$.
			
			\item[$(iii)$] If $t<\frac{r+2}{2}$ and $W(r,t)<0$, then $\psi_{\mathrm{bd}}(r,t)$ is a
			threshold for $(r,t)$, and $\psi_{\mathrm{bd}}(r,t)<\psym(r,t)$. It is sharp if and only if
			\[
			(r+1)^{2}-4t(t-1)=D^{2}
			\]
			for an integer $D$ with $1\le D\le r-1$. In that case every extremal graph consists of
			two cliques of orders $r+1-t$ and $\frac{r+1+D}{2}$ and an independent cut of size $t$
			joined to every vertex of the smaller clique; it has $\frac{3(r+1)+D}{2}$ vertices.
		\end{enumerate}
		Moreover, $W(r,t)\ne0$ for all integers $r$ and $t$ with $2\le t<r$
		(Proposition~\ref{prop:nonzero}), so $(ii)$ and $(iii)$
		together cover every $t$ with $2\le t<\frac{r+2}{2}$.
	\end{thm}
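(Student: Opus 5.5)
The approach is the standard one from \cite{ZW,Z,OPPZ}: a separating set yields a block-diagonal structure, and eigenvalue interlacing on an equitable-type quotient bounds $\lambda_2$ from below.

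\textbf{Setup and reduction.} Let $G$ be connected, $r$-regular, with $\kappa(G)\le t$. Fix a minimum vertex cut $S$ with $|S|=s\le t$, and split $G-S$ into two nonempty unions of components $V_1,V_2$. Set $n_i=|V_i|$ and $e_i=e(V_i,S)$. Each $V_i$ has $\sum_{v\in V_i}\deg_{V_i}(v)=rn_i-e_i$. Each vertex of $V_i$ has at most $s$ neighbors in $S$, which gives $n_i\ge r+1-s$.

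The key tool is interlacing for the two vectors supported on $V_1$ and $V_2$. Alternatively, take the $3\times3$ quotient matrix of the partition $(V_1,S,V_2)$; its second eigenvalue is at most $\lambda_2(G)$.

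More precisely, I would use the test function $f=a\mathbf 1_{V_1}-b\mathbf 1_{V_2}+(\text{values on }S)$, orthogonal to $\mathbf 1$, in the Rayleigh quotient for $\lambda_2=\max_{f\perp\mathbf 1}f^TAf/f^Tf$. This gives $\lambda_2\ge$ the minimum over $i$ of the second-largest eigenvalue of the quotient. The quantity $\lambda_2$ is then bounded below by a function of $(n_1,n_2,e_1,e_2,s,e(S))$. Monotonicity arguments push the worst case to the following configuration: $s=t$, $S$ independent, both $V_i$ cliques, and $n_i$ as small as possible given $e_i$.

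\textbf{Optimization.} With $V_i=K_{m_i}$, each vertex of $V_i$ sends $r+1-m_i$ edges to $S$, so $e_i=m_i(r+1-m_i)$ and $e_1+e_2=rt$ (since $S$ is independent). The quotient for $(V_1,S,V_2)$ has rows
\[
\bigl(m_1-1,\;r+1-m_1,\;0\bigr),\qquad \Bigl(\tfrac{e_1}{t},\;0,\;\tfrac{e_2}{t}\Bigr),\qquad \bigl(0,\;r+1-m_2,\;m_2-1\bigr).
\]
Its second eigenvalue is a root of an explicit quadratic after dividing out the eigenvalue $r$.

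The continuous optimization over $(m_1,m_2)$ subject to $e_1+e_2=rt$ has two candidate minimizers:
\begin{itemize}
\item[(a)] The symmetric point $m_1=m_2=m$, with $2m(r+1-m)=rt$. This gives $m=r+1-k$ and the value $r-k$, which equals $\psi(r,t)$ by the quadratic formula.
\item[(b)] The boundary point $m_1=r+1-t$ (so the smaller clique is fully joined to $S$). Here $m_2$ solves $m_2(r+1-m_2)=t(t-1)$, i.e.\ $m_2=\frac{r+1+D}{2}$ with $D^2=(r+1)^2-4t(t-1)$. The root is $\eta_0$-related, and one obtains exactly the quadratic \eqref{eq:psibd}.
\end{itemize}
Comparing (a) and (b) amounts to comparing two algebraic numbers. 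Clearing radicals, with care about signs, produces the polynomial $W(r,t)$: (a) is the minimum when $W>0$, and (b) when $W<0$.

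For $t\ge\frac{r+2}{2}$, Fiedler's bound $r-t$ is attained. A construction such as $K_{r+1-t}$-type blocks joined to $S$ realizes $\mu_2=\kappa$, using the characterization in \cite{KMNS}.

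\textbf{Sharpness and equality.} Equality forces every inequality in the reduction to be tight: the $V_i$ are cliques of the optimal orders, $S$ is independent of size $t$, and the quotient is equitable. In case (ii) equitability needs each cut vertex to have $r/2$ neighbors in each clique, so $r$ is even. Integrality of $m$ and of the edge counts gives $t=2k(r+1-k)/r\in\mathbb Z$, which is equivalent to $r\mid 2k(k-1)$. In case (iii), $m_2$ must be an integer, giving the condition $D\in\mathbb Z$ with $1\le D\le r-1$. One checks that the constructed graphs have no eigenvalue above the quotient value: eigenvectors orthogonal to the block indicators have eigenvalue $-1$, $0$, or small. The final claim $W\ne0$ is Proposition~\ref{prop:nonzero}.

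\textbf{Main obstacle.} I expect the hardest step to be the reduction to cliques plus an independent cut. The monotonicity argument must handle non-clique $V_i$ with the same $e_i$ and edges inside $S$, and must show that the second eigenvalue of a general quotient is at least the clique case. The sign analysis yielding $W$ is a second difficulty.
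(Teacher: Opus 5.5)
You follow the paper's architecture: interlacing on $\{A,S,B\}$, relaxation to two cliques over an independent cut, two endpoint candidates compared through $W$, then an equality analysis. But the steps that carry the inequality are asserted rather than proved.

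\textbf{The reduction.} The step you flag as the main obstacle needs no separate treatment of non-clique sides or of edges inside $S$. The paper pads a minimum cut to one of size exactly $t$, so no monotonicity in $s$ is needed. It applies Theorem~\ref{thm:reduce} to the tridiagonal quotient and gets $t\det(xI-\Qt)=t(x-\xi)(x-\eta)+(x-\xi)L+(x-\eta)K$ for arbitrary sides, where $\xi,\eta$ are the average internal degrees and $K=e(A,S)$, $L=e(B,S)$. For $\xi<\eta$ the largest root lies in $(\xi,\eta)$. By implicit differentiation it is strictly increasing in $K$ and in $\eta$ and strictly decreasing in $L$. So moving $K$, $L$, $\eta$ to the extreme values allowed by $K\ge(\xi+1)(r-\xi)$, $K+L\le rt$ and $(\eta+1)(r-\eta)\le L$ only lowers the root and lands on the arc $\mathcal F(r,t)$.

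\textbf{The endpoint claim.} The more serious gap is your assertion that the continuous problem ``has two candidate minimizers''. Nothing in your proposal excludes an interior minimum on the arc $(\xi+1)(r-\xi)+(\eta+1)(r-\eta)=rt$. Without that, neither the dichotomy via $W$ nor the identification of the extremal configurations follows. The missing idea is Lemma~\ref{lem:concave}. With $p=\xi-\frac{r-1}{2}$ and $q=\eta-\frac{r-1}{2}$ the arc lies on a circle, and the quadratic depends only on $\sigma=p+q>0$. Its discriminant satisfies $\Delta''(\sigma)=-2-\frac{12\sigma}{t}<0$, so $\Lambda$ is strictly concave in $\sigma$ and is minimized only at $\xi=\eta$ or at $\xi=r-t$. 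Clearing radicals to reach $W$ also requires proving $BS_0>A$, with $(r,t)=(4,2)$ handled separately, before the sign of $W$ can decide anything.

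\textbf{Sharpness.} There are three further gaps here.

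First, $\lambda_2(G)=\lambda_2(Q)$ alone does not make a partition equitable. For $C_4$ with parts $\{1\},\{2\},\{3,4\}$ the quotient has $\lambda_2=0=\lambda_2(C_4)$, yet vertices $3$ and $4$ have different numbers of neighbors in $\{1\}$. So you cannot get $\deg_A(s)=\deg_B(s)=\frac r2$ that way. The paper instead notes that $\mathbf 1_A-\mathbf 1_B\perp\mathbf 1$ has Rayleigh quotient $a-1=\lambda_2(G)$, hence is an eigenvector, and reads the eigen-equation at each $s\in S$.

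Second, in the converse of $(ii)$ the eigenvalues from vectors summing to zero on each part are not just $-1$, $0$ or ``small''. They satisfy $\lambda(\lambda+1)\le\nu$, with $\nu$ bounded only by $rk$. The inequality $rk\le a(a-1)=(r+1-k)(r-k)$ is what makes every such clique pair extremal. It fails once $k>\frac{3r+1-\sqrt{5r^{2}+2r+1}}{2}\approx0.38\,r$, and the arithmetic conditions alone do not exclude this range (for instance $r=90$, $k=36$, $t=44$). The paper proves that $W(r,t)>0$ forces $3k\le r$, via an explicit expansion of $-3^{10}W$. This is where the sign hypothesis enters the existence direction.

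Third, in $(i)$ with $r$ and $t$ both odd, two cliques of order $r+1-t$ joined to a cut of size $t$ would give an $r$-regular graph on $2r+2-t$ vertices. That number is odd while $r$ is odd, which is impossible. KMNS only describes the shape of equality graphs; it does not supply one. The paper uses $(K_a\cup\widehat H)\vee H$, where $\widehat H$ is the complement of a perfect matching on $a+1$ vertices and $H$ is the complement of $\mathrm{circ}(t;1,\dots,\frac{u+2}{2})$ with $u=r-t$. A Dirichlet-kernel estimate then gives $\lambda_2(H)\le r-t$. That bound is not automatic: it fails whenever $\overline H$ has a bipartite component.
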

	
	The two numbers in $(ii)$ and $(iii)$ are the two endpoints of a one-parameter family of
	lower bounds (Proposition~\ref{prop:arc}), and $W(r,t)$ decides which endpoint gives the
	smaller value (Proposition~\ref{prop:endpoint}). Asymptotically, $(ii)$ governs the range
	$t$ up to about $0.4184\,r$ and $(iii)$ the rest of the range $t<\frac{r+2}{2}$; see Remark~\ref{rem:asym}.
	Extremal graphs for $(i)$ are constructed in Subsection~\ref{sec:large}: when $rt$ is even
	they are obtained by joining two disjoint cliques of equal order to a regular graph on $t$
	vertices, and when $rt$ is odd the two sides have consecutive orders and only the smaller
	one is complete.

	Both thresholds improve $\zeta(r,t)$. For $\psym(r,t)$ this is a short computation:
	writing $N=r^{2}-2rt+4r+3-3t$, the inequality $\psym(r,t)>\zeta(r,t)$ is equivalent
	to $(r+3-t)\sqrt{(r+1)^{2}-2rt}>N$, and
	\[
	\bigl((r+1)^{2}-2rt\bigr)(r+3-t)^{2}-N^{2}
	=t\bigl(r^{2}t-2rt^{2}+2rt+4r-8t+12\bigr)
	=t\bigl(tu^{2}+2t^{2}u+4u+2t^{2}-t+8\bigr),
	\]
	where $r=2t-1+u$; the range $t<\frac{r+2}{2}$ is exactly $u\ge0$, and the last expression is
	positive there. The same conclusion holds for $\psi_{\mathrm{bd}}(r,t)$. Denote by $h(x)$ the
	left side of~\eqref{eq:psibd}. It opens upwards and $h(0)<0$, so one root is negative,
	and since $\zeta=\zeta(r,t)>0$ it suffices to show $h(\zeta)<0$. Writing
	$D_0=\sqrt{(r+1)^{2}-4t(t-1)}$, so that $\eta_0=\frac{r-1+D_0}{2}$, we have
	$2h(\zeta)=P-D_0(\zeta+1)$, where
	$P=2\zeta^{2}+2t\zeta-2(t-1)(r-t)-(r-1)(\zeta+1)$. Here $\zeta+1>0$, because
	$\zeta-(r-t)=\frac{t(r-2t+4)}{2(r+3-t)}>0$. Hence it is enough that
	$D_0^{2}(\zeta+1)^{2}>P^{2}$, since then $D_0(\zeta+1)>|P|\ge P$. A computation gives
	\[
	4(r+3-t)^{4}\bigl(D_0^{2}(\zeta+1)^{2}-P^{2}\bigr)=t\,F(r,t)
	\]
	for a polynomial $F$ of degree $7$ with integer coefficients. After the substitution
	$r=2t-1+u$, every coefficient of $F$ as a polynomial in $u$ and $t$ is positive except that
	of $t$, which is $-144$, while the coefficient of $t^{2}$ is $1432$. Since $t\ge2$ and
	$u\ge0$, we get $F>0$, and therefore $\psi_{\mathrm{bd}}(r,t)>\zeta(r,t)$.
	
	Taking $k=1$ in $(ii)$ makes the divisibility condition vacuous, and
	$W(r,2)=16\bigl(r^{3}-3r^{2}+r-1\bigr)>0$ for $r\ge3$. Hence, for every even $r\ge4$, the
	number $r-1$ is a sharp threshold for $(r,2)$, attained by the graph obtained from
	$K_r\cup K_r$ and an independent set of size $2$ by joining each vertex of the independent
	set to $\frac r2$ vertices of each clique; see Figure~\ref{fig:t2}. This recovers the even
	case of the theorem of Zhang~\cite{Z} cited above; for odd $r\ge5$ he showed that the sharp
	threshold for $(r,2)$ exceeds $r-1$, so $\psym(r,2)=r-1$ is then not sharp, in agreement with the
	parity condition in $(ii)$. The sign condition in
	$(ii)$ cannot be dropped: for $(r,t)=(12,6)$ the arithmetic conditions of $(ii)$ hold and the
	graph they describe has $\lambda_2=\psym(12,6)$, but $W(12,6)<0$ and $\psym(12,6)$ is not a
	threshold (Remark~\ref{rem:list}). In contrast to $(ii)$, the condition in $(iii)$ does not
	require $r$ to be even; for instance, it holds for $(r,t)=(7,4)$.
	
	\begin{figure}[H]
		\centering
		\begin{tikzpicture}[scale=0.88]
			\node[lbl] at (-3.6,2.55) {$K_r$};
			\node[lbl] at ( 3.6,2.55) {$K_r$};
			\foreach \i/\ang in {1/90,2/150,3/210,4/270,5/330,6/30}
			\node[vtx] (a\i) at ($(-3.6,0.35)+(\ang:1.35)$) {};
			\foreach \i in {1,...,6} \foreach \j in {1,...,6}
			{ \ifnum\i<\j \draw[ce] (a\i)--(a\j); \fi }
			\draw[blob] (-5.35,-1.55) rectangle (-1.85,2.25);
			\foreach \i/\ang in {1/90,2/30,3/330,4/270,5/210,6/150}
			\node[vtx] (b\i) at ($(3.6,0.35)+(\ang:1.35)$) {};
			\foreach \i in {1,...,6} \foreach \j in {1,...,6}
			{ \ifnum\i<\j \draw[ce] (b\i)--(b\j); \fi }
			\draw[blob] (1.85,-1.55) rectangle (5.35,2.25);
			\node[cut] (s1) at (0, 1.05) {$s_1$};
			\node[cut] (s2) at (0,-0.35) {$s_2$};
			\foreach \i in {a1,a2,a6} \draw[se] (\i)--(s1);
			\foreach \i in {a3,a4,a5} \draw[se] (\i)--(s2);
			\foreach \i in {b1,b2,b6} \draw[se] (\i)--(s1);
			\foreach \i in {b3,b4,b5} \draw[se] (\i)--(s2);
			\node[tag,align=center] at (0,-1.35) {$S$ independent, $\deg s_i=\frac r2+\frac r2=r$};
		\end{tikzpicture}
		\caption{The extremal graph for $t=2$ and even $r$, drawn for $r=6$; it is the graph
			$H_{r,r/2}$ of~\cite{Z}. Both sides are complete of order $r$, so $\lambda_2=r-1$ by
			Corollary~\ref{cor:diag}.}\label{fig:t2}
	\end{figure}
	
	Although the question is the vertex analogue of~\cite{OPPZ}, the answer has a different
	character, and the vertex case is considerably harder. In the edge case the two sides of a
	minimum edge cut interact only through the $t$ cut edges, so each side is constrained
	separately by a minimum-order condition, and one family of graphs, two copies of the same
	graph on $r+1$ or $r+2$ vertices joined by $t$ edges, is extremal for every $t\ge3$. For a vertex
	cut $S$ the two sides $A$ and $B$ are not adjacent, but they share $S$, whose vertices
	divide their neighbors between $A$ and $B$. In place of two separate conditions there is a
	single budget $e(A,S)+e(B,S)\le rt$ for which the two sides compete, and the problem becomes
	a genuinely two-parameter optimization over the average degrees of $A$ and $B$. The
	admissible pairs form an arc rather than a point, the lower bound is minimized at one of
	its two endpoints, and which endpoint gives the minimum changes with $(r,t)$; this is the
	source of the polynomial $W(r,t)$. Moreover, a graph attaining the bound must divide the
	budget into integer parts, and this requirement, which is harmless in the edge case,
	produces the arithmetic conditions in $(ii)$ and $(iii)$. Accordingly, no single closed form
	serves every $t$, and the shape of the extremal graphs changes with $t$: for $t=1$ neither
	side is complete (Figure~\ref{fig:t1}); in $(ii)$ both sides are complete of equal order
	(Figures~\ref{fig:t2} and~\ref{fig:diag}); in $(iii)$ they are complete of different orders
	(Figure~\ref{fig:bd}); and in $(i)$ the cut is joined to everything outside itself
	(Figure~\ref{fig:large}).
	
	The paper is organized as follows. Section~\ref{sec:def} collects the tools.
	Section~\ref{sec:main} contains the key identity, the reduction to the arc, the comparison
	of its endpoints, and the proof of Theorem~\ref{thm:main}, with part $(i)$ treated in
	Subsection~\ref{sec:large}. Section~\ref{sec:remark} contains concluding remarks. In the figures, dashed regions are the two sides of the cut, shaded circles are
	cut vertices, thin lines are edges inside a side, and thick lines are edges to the cut.
	
	\begin{figure}[H]
		\centering
		\begin{tikzpicture}[scale=0.92]
			\draw[clq,rounded corners=14pt] (-6.1,-1.75) rectangle (-1.5,1.75);
			\node[lbl] at (-3.8,2.1) {$K_{r+1}-\frac{c}{2}K_2$};
			\foreach \i/\y in {1/1.05,2/0.35,3/-0.35,4/-1.05} \node[vtx] (a\i) at (-2.15,\y) {};
			\foreach \i/\y in {1/0.7,2/-0.7} \node[vtx] (ac\i) at (-4.1,\y) {};
			\node[vtx] (ac3) at (-5.4,0) {};
			\foreach \i in {a1,a2,a3,a4} \foreach \j in {ac1,ac2,ac3} \draw[ce] (\i)--(\j);
			\draw[ce] (ac1)--(ac2) (ac1)--(ac3) (ac2)--(ac3);
			\draw[ce] (a1)--(a2) (a3)--(a4);
			\draw[ce] (a1) to[bend right=32] (a3);
			\draw[ce] (a2) to[bend right=32] (a4);
			\node[tag,align=center] at (-3.8,-2.15) {the $c$ deficient vertices};
			\draw[clq,rounded corners=14pt] (1.5,-1.75) rectangle (6.1,1.75);
			\node[lbl] at (3.8,2.1) {$K_{r+1}-\frac{r-c}{2}K_2$};
			\foreach \i/\y in {1/1.05,2/0.35,3/-0.35,4/-1.05} \node[vtx] (b\i) at (2.15,\y) {};
			\foreach \i/\y in {1/0.7,2/-0.7} \node[vtx] (bc\i) at (4.1,\y) {};
			\node[vtx] (bc3) at (5.4,0) {};
			\foreach \i in {b1,b2,b3,b4} \foreach \j in {bc1,bc2,bc3} \draw[ce] (\i)--(\j);
			\draw[ce] (bc1)--(bc2) (bc1)--(bc3) (bc2)--(bc3);
			\draw[ce] (b1)--(b2) (b3)--(b4);
			\draw[ce] (b1) to[bend left=32] (b3);
			\draw[ce] (b2) to[bend left=32] (b4);
			\node[tag,align=center] at (3.8,-2.15) {the $r-c$ deficient vertices};
			\node[cut] (v) at (0,0) {$v$};
			\foreach \i in {a1,a2,a3,a4} \draw[se] (\i)--(v);
			\foreach \i in {b1,b2,b3,b4} \draw[se] (\i)--(v);
			\node[tag] at (0,-2.15) {$\deg v=c+(r-c)=r$};
		\end{tikzpicture}
		\caption{The extremal graphs of Zhang and Wang~\cite{ZW} for $t=1$ and even $r$, with
			$c=2\lfloor r/4\rfloor$. Neither side is complete, so Corollary~\ref{cor:diag} does not
			apply and $\lambda_2$ is a root of a quartic.}\label{fig:t1}
	\end{figure}
	
	For undefined terms in graph theory, see West~\cite{W}; for the basics of spectral graph
	theory, see Brouwer and Haemers~\cite{BH} or Godsil and Royle~\cite{GR}.
	
	\section{Definitions and Tools}\label{sec:def}
	
	Let $\mathcal P=\{V_1,\dots,V_s\}$ be a partition of $V(G)$. The \emph{quotient matrix}
	$Q$ of $\mathcal P$ is the $s\times s$ matrix with $Q_{i,j}=\frac{e(V_i,V_j)}{|V_i|}$ for
	$i\ne j$ and $Q_{i,i}=\frac{2|E(G[V_i])|}{|V_i|}$. The partition is \emph{equitable} if
	every vertex of $V_i$ has exactly $Q_{i,j}$ neighbors in $V_j$ for all $i,j$; in that case
	every eigenvalue of $Q$ is an eigenvalue of $G$.
	
	\begin{thm}[Quotient Interlacing Theorem, \cite{BH}]\label{thm:interlace}
		The eigenvalues of the quotient matrix of a partition of $V(G)$ interlace the eigenvalues
		of $G$.
	\end{thm}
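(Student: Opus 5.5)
The statement is the classical quotient interlacing theorem of Brouwer and Haemers. I will reduce it to Cauchy interlacing for a compression of a symmetric matrix, and then prove that interlacing with the Courant--Fischer min-max principle. Throughout, interlacing means the following: if $G$ has $n$ vertices and $\mathcal P$ has $s$ parts, and $\theta_1\ge\dots\ge\theta_s$ are the eigenvalues of $Q$, then
\[
\lambda_i(G)\ge\theta_i\ge\lambda_{n-s+i}(G)\qquad(1\le i\le s).
\]

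First I set up the matrices. Let $A=A(G)$ and let $S$ be the $n\times s$ characteristic matrix of $\mathcal P$, whose $j$-th column is the indicator vector of $V_j$. Then $S^{T}S=\Delta:=\mathrm{diag}(|V_1|,\dots,|V_s|)$. Moreover, $(S^{T}AS)_{ij}=e(V_i,V_j)$ for $i\ne j$ and $(S^{T}AS)_{ii}=2|E(G[V_i])|$, so $Q=\Delta^{-1}S^{T}AS$.

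Next I symmetrize. Put $R=S\Delta^{-1/2}$; its columns are orthonormal, so $R^{T}R=I_s$. Then
\[
Q=\Delta^{-1/2}\bigl(R^{T}AR\bigr)\Delta^{1/2},
\]
so $Q$ is similar to the real symmetric matrix $B=R^{T}AR$. In particular $Q$ has real eigenvalues $\theta_1\ge\dots\ge\theta_s$, and they coincide with those of $B$.

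The main step is to show that $\lambda_i(A)\ge\theta_i\ge\lambda_{n-s+i}(A)$ for every compression $B=R^{T}AR$ with $R^{T}R=I$. For the left inequality, let $U$ be the span of eigenvectors of $B$ for $\theta_1,\dots,\theta_i$. Then $RU$ is an $i$-dimensional subspace of $\mathbb R^n$, because $R$ is an isometry. For $x\in U$ we have
\[
\frac{(Rx)^{T}A(Rx)}{(Rx)^{T}(Rx)}=\frac{x^{T}Bx}{x^{T}x}\ge\theta_i .
\]
Courant--Fischer then gives $\lambda_i(A)\ge\theta_i$. Applying this to $-A$ and $-B=R^{T}(-A)R$ gives $-\lambda_{n-s+i}(A)\ge-\theta_i$, which is the right inequality.

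Finally, for the remark in the definitions about equitable partitions: if $\mathcal P$ is equitable, then $AS=SQ$. So for every eigenvector $Qy=\theta y$ we get $A(Sy)=\theta Sy$, and $Sy\ne0$ because $S$ has full column rank. Hence every eigenvalue of $Q$ is an eigenvalue of $G$.

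There is no real obstacle here. The only point needing care is that $Q$ itself is not symmetric, which the similarity to $B$ handles.
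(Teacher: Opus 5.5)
Your proof is correct. The paper gives no proof of its own and simply cites Brouwer and Haemers, and your argument is exactly the standard proof given there: write $Q=\Delta^{-1}S^{T}AS$, note that it is similar to the compression $R^{T}AR$ with $R=S\Delta^{-1/2}$ having orthonormal columns, derive Cauchy interlacing from Courant--Fischer, and get the lower bound by passing to $-A$. Your treatment of the equitable case via $AS=SQ$ and the full column rank of $S$ is also correct, and it covers the remark the paper makes in its definitions.
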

	
	We shall use the following reduction, which replaces a tridiagonal quotient matrix with
	constant row sums by one of smaller order carrying the same non-Perron spectrum.
	
	\begin{thm}[\cite{OPPZ}]\label{thm:reduce}
		Let
		\[
		M=\begin{pmatrix}
			a_0 & b_0 & & \\
			c_1 & a_1 & \ddots & \\
			& \ddots & \ddots & b_{n-1}\\
			& & c_n & a_n
		\end{pmatrix}
		\]
		be a nonnegative tridiagonal matrix each of whose row sums equals $r$, with eigenvalues
		$\lambda_1\ge\cdots\ge\lambda_{n+1}$. Then the $n\times n$ matrix
		\[
		\widetilde M=\begin{pmatrix}
			r-b_0-c_1 & b_1 & & \\
			c_1 & r-b_1-c_2 & \ddots & \\
			& \ddots & \ddots & b_{n-1}\\
			& & c_{n-1} & r-b_{n-1}-c_n
		\end{pmatrix}
		\]
		has eigenvalues $\lambda_2,\dots,\lambda_{n+1}$. In particular $\lambda_1(M)=r$ and
		$\lambda_2(M)=\lambda_1(\widetilde M)$.
	\end{thm}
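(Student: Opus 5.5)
The plan is to exhibit an explicit intertwining relation between $M$ and $\widetilde M$ through a ``difference'' operator whose kernel is spanned by the all-ones vector $\mathbf 1$. Index the coordinates of $\mathbb R^{n+1}$ by $0,\dots,n$, and set $c_0=b_n=0$. Let $\Delta$ be the $n\times(n+1)$ matrix defined by $(\Delta x)_i=x_{i-1}-x_i$ for $i=1,\dots,n$. Then $\Delta$ has rank $n$ and $\ker\Delta=\operatorname{span}(\mathbf 1)$. The key identity to verify is
\[
\Delta M=\widetilde M\,\Delta .
\]

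To check this, compare row $i$ of both sides.

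\emph{Left side.} Row $i$ of $\Delta M$ is row $i-1$ of $M$ minus row $i$ of $M$. Its only possible nonzero entries are, in columns $i-2,i-1,i,i+1$,
\[
c_{i-1},\qquad a_{i-1}-c_i,\qquad b_{i-1}-a_i,\qquad -b_i .
\]

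\emph{Right side.} Row $i$ of $\widetilde M\Delta$ is
\[
c_{i-1}(x_{i-2}-x_{i-1})+(r-b_{i-1}-c_i)(x_{i-1}-x_i)+b_i(x_i-x_{i+1}).
\]
Its coefficients on $x_{i-2},x_{i-1},x_i,x_{i+1}$ are
\[
c_{i-1},\qquad r-c_{i-1}-b_{i-1}-c_i,\qquad b_{i-1}+c_i+b_i-r,\qquad -b_i .
\]

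\emph{Matching.} The row-sum hypotheses $c_{i-1}+a_{i-1}+b_{i-1}=r$ and $c_i+a_i+b_i=r$ turn these into exactly $a_{i-1}-c_i$ and $b_{i-1}-a_i$. The boundary rows $i=1$ and $i=n$ are the same computation under the conventions $c_0=b_n=0$. This is a routine but index-sensitive check, and it is the only place the row-sum hypothesis enters. I expect it to be the main point requiring care rather than any real obstacle.

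Next, extend $\Delta$ to an invertible $(n+1)\times(n+1)$ matrix $S$ by appending the row $e_n^{T}$. The row space of $\Delta$ consists of vectors with coordinate sum zero, which do not include $e_n^T$, so $S$ is invertible. The intertwining relation says that the first $n$ rows of $SMS^{-1}$ are $\bigl(\widetilde M\ \ 0\bigr)$. Hence $SMS^{-1}$ is block lower triangular with diagonal blocks $\widetilde M$ and a $1\times1$ block $\rho$. This gives
\[
\det(xI-M)=(x-\rho)\det(xI-\widetilde M).
\]
Since $M\mathbf 1=r\mathbf 1$ while $\Delta\mathbf 1=0$, comparing last coordinates, or simply taking traces, identifies $\rho=r$. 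Indeed $\operatorname{tr}\widetilde M=\sum_i (r-b_{i-1}-c_i)=nr-\sum b-\sum c=\operatorname{tr}M-r$.

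Finally, we need $r=\lambda_1(M)$, so that removing one copy of $r$ from the spectrum of $M$ leaves $\lambda_2,\dots,\lambda_{n+1}$. Since $M$ is nonnegative with a positive eigenvector $\mathbf 1$ for $r$, every eigenvalue has modulus at most the maximum row sum $r$. Therefore $r$ is the spectral radius and may be taken as $\lambda_1$.

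The eigenvalues are real, as implicit in the ordering, in the situation of interest. When all $b_{i}c_{i+1}>0$, the tridiagonal $M$ is diagonally similar to a symmetric matrix. If some product $b_ic_{i+1}$ vanishes, $M$ is block triangular with smaller tridiagonal blocks to which the same remark applies.

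Therefore the spectrum of $\widetilde M$ is $\lambda_2,\dots,\lambda_{n+1}$, and in particular $\lambda_2(M)=\lambda_1(\widetilde M)$.
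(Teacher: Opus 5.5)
Your proof is correct and complete. I checked the intertwining $\Delta M=\widetilde M\Delta$ entry by entry, including the boundary rows under $c_0=b_n=0$. Conjugating by $S$ then gives $\det(xI-M)=(x-r)\det(xI-\widetilde M)$, and the trace computation identifies the $1\times1$ block as $r$. The row-sum bound $|\lambda|\le r$ places $r$ at the top of the spectrum, so removing it from the spectrum of $M$ leaves exactly $\lambda_2,\dots,\lambda_{n+1}$.

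The paper gives no proof of its own here; it imports the result from OPPZ, so there is no argument in the paper to compare against. Your route is the standard similarity argument: $S=I-N$ with $N$ the superdiagonal shift, so $S^{-1}$ is the upper triangular all-ones matrix. In effect you sum columns and difference rows.

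This route factors the characteristic polynomial, so multiplicities are handled correctly without any diagonalizability assumption. That is more than an eigenvector-chasing argument would give. Your remark that nonnegative tridiagonal matrices have real spectrum also covers what the ordering $\lambda_1\ge\cdots\ge\lambda_{n+1}$ in the statement assumes: it uses diagonal symmetrization when every product $b_ic_{i+1}>0$, and block triangularity otherwise.
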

	
	We record two standard facts. First, if $G$ is $d$-regular on $n$ vertices, then
	$\overline G$ is $(n-1-d)$-regular and
	\begin{equation}\label{eq:comp}
		\lambda_2(G)=-1-\lambda_{\min}\bigl(\overline G\bigr).
	\end{equation}
	Second, a $d$-regular graph satisfies $\lambda_{\min}\ge-d$, with equality if and only if
	it has a bipartite component. Finally, a $d$-regular graph on $n$ vertices exists if and
	only if $d\le n-1$ and $dn$ is even; when it exists, a circulant realizes it.
	
	\section{Main Results}\label{sec:main}
	
	Except in the last subsection, $G$ is a connected $r$-regular graph with $\kappa(G)\le t$ and
	$2\le t<\frac{r+2}{2}$. A vertex cut of size exactly $t$ exists: take a minimum vertex cut
	$S_{0}$ and a side $A_{0}$ of it, and adjoin to $S_{0}$ any $t-|S_{0}|$ vertices of
	$A_{0}$, which is possible because every vertex of $A_{0}$ has all of its neighbors in
	$A_{0}\cup S_{0}$, so $|A_{0}|\ge r+1-|S_{0}|>t-|S_{0}|$. Fix such a cut $S$ with $|S|=t$
	and a partition $A,B$ of $V(G)\setminus S$ with $e(A,B)=0$, and set
	\[
	a=|A|,\quad b=|B|,\quad K=e(A,S),\quad L=e(B,S),\quad
	\xi=\frac{2\bigl|E(G[A])\bigr|}{a},\quad \eta=\frac{2\bigl|E(G[B])\bigr|}{b}.
	\]
	Since every vertex of $A$ has all of its neighbors in $A\cup S$, counting the edges
	leaving $A$ gives $K=a(r-\xi)$, and likewise $L=b(r-\eta)$. Thus $\xi$ and $\eta$
	are determined by $a,b,K,L$. No regularity of $G[A]$, of $G[B]$, or of the bipartite
	graphs between the parts is assumed.
	
	\subsection{A quadratic identity}
	
	The partition $\{A,S,B\}$ has a nonnegative tridiagonal quotient matrix all of whose row
	sums equal $r$, so Theorems~\ref{thm:interlace} and~\ref{thm:reduce} give
	\begin{equation}\label{eq:Qt}
		\lambda_2(G)\;\ge\;\lambda_1\bigl(\Qt\bigr),
		\qquad
		\Qt=\begin{pmatrix}
			r-K\bigl(\frac1a+\frac1t\bigr) & \frac Lt\\[4pt]
			\frac Kt & r-L\bigl(\frac1t+\frac1b\bigr)
		\end{pmatrix}
		=\begin{pmatrix} q_1 & \frac Lt\\[2pt] \frac Kt & q_2\end{pmatrix}.
	\end{equation}
	
	\begin{lem}\label{lem:key}
		With the notation above,
		\begin{equation}\label{eq:key}
			t\cdot\det\bigl(xI-\Qt\bigr)=t\,(x-\xi)(x-\eta)+(x-\xi)L+(x-\eta)K .
		\end{equation}
		If $\xi\ne\eta$, then equivalently, $\lambda_1(\Qt)$ is the largest root $\lambda$ of
		\[
		\frac{K}{t(\lambda-\xi)}+\frac{L}{t(\lambda-\eta)}=-1 .
		\]
	\end{lem}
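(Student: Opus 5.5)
The identity is a direct computation on the $2\times 2$ matrix $\Qt$ of \eqref{eq:Qt}; the only input is the relations $K=a(r-\xi)$ and $L=b(r-\eta)$ established just before the lemma. The plan is to rewrite the diagonal entries of $\Qt$ in terms of $\xi,\eta$, expand the determinant, and then divide.

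First, rewrite the diagonal entries. Since $K/a=r-\xi$, we get
\[
q_1=r-\frac Ka-\frac Kt=\xi-\frac Kt,
\]
and likewise $q_2=\eta-\frac Lt$. Hence
\[
x-q_1=(x-\xi)+\frac Kt,\qquad x-q_2=(x-\eta)+\frac Lt .
\]

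Next, expand the determinant:
\[
\det(xI-\Qt)=\Bigl((x-\xi)+\frac Kt\Bigr)\Bigl((x-\eta)+\frac Lt\Bigr)-\frac{KL}{t^2}.
\]
The product $\frac{K}{t}\cdot\frac{L}{t}$ cancels against the off-diagonal term $\frac{KL}{t^2}$. What remains is $(x-\xi)(x-\eta)+\frac Lt(x-\xi)+\frac Kt(x-\eta)$, and multiplying by $t$ gives \eqref{eq:key}.

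For the equivalent form, let $\lambda=\lambda_1(\Qt)$, the largest root of the quadratic $\det(xI-\Qt)$, which has real roots because $\Qt$ is similar to the symmetric part of a nonnegative matrix, or simply because its discriminant is $(q_1-q_2)^2+4KL/t^2\ge0$. If $\lambda\ne\xi,\eta$, dividing \eqref{eq:key} by $t(\lambda-\xi)(\lambda-\eta)$ gives
\[
1+\frac{L}{t(\lambda-\eta)}+\frac{K}{t(\lambda-\xi)}=0,
\]
which is the stated equation. Conversely, every solution of that equation with $\lambda\notin\{\xi,\eta\}$ is a root of the quadratic. So it remains to check that the largest root of the quadratic is not $\xi$ or $\eta$ when $\xi\ne\eta$.

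This exclusion is the only delicate point. Evaluating \eqref{eq:key} at $x=\xi$ gives $(\xi-\eta)K$, and at $x=\eta$ gives $(\eta-\xi)L$. Assume $K,L>0$; this holds because $S$ is a minimum-type cut with both sides attached, and if $K=0$ then $G$ would be disconnected. With that, if $\xi\ne\eta$, neither value vanishes, so $\xi$ and $\eta$ are not roots. Moreover the two values have opposite signs, so one root lies strictly between $\xi$ and $\eta$ and the largest root exceeds $\max(\xi,\eta)$. On $(\max(\xi,\eta),\infty)$ the left side of the rational equation decreases from $-\infty$ to $0$, which confirms that $\lambda_1(\Qt)$ is its largest root.
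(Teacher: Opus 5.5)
Your derivation of \eqref{eq:key} is the same as the paper's. You rewrite $q_1=\xi-\frac Kt$ and $q_2=\eta-\frac Lt$ using $K=a(r-\xi)$ and $L=b(r-\eta)$, and the $\frac{KL}{t^2}$ terms cancel. For the second form you go further than the paper, which simply divides by $t(x-\xi)(x-\eta)$. You note that the left side of \eqref{eq:key} equals $(\xi-\eta)K$ at $x=\xi$ and $(\eta-\xi)L$ at $x=\eta$. So when $\xi\ne\eta$ and $K,L>0$, neither pole is a root. Positivity of $K,L$ follows from connectivity of $G$ alone; $S$ need not be a minimum cut. That check already finishes the proof: the roots of the quadratic and the solutions of the rational equation then coincide as sets, so their largest elements agree.

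However, your last two sentences are false and should be deleted. Take $\xi<\eta$. The upward parabola is negative at $\xi$ and positive at $\eta$, so $\xi$ lies between its two roots and $\eta$ lies above both. Hence the largest root lies strictly inside $(\xi,\eta)$, not above $\max(\xi,\eta)$. This is exactly the inequality $\xi<\lambda<\eta$ that the paper proves in the Claim inside the proof of Proposition~\ref{prop:arc}.

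Likewise, on $(\max(\xi,\eta),\infty)$ both fractions $\frac{K}{t(\lambda-\xi)}$ and $\frac{L}{t(\lambda-\eta)}$ are positive. So the left side decreases from $+\infty$ to $0$ there and never equals $-1$. The correct picture is that the left side is strictly decreasing on each of the three intervals cut out by $\xi$ and $\eta$:
\begin{itemize}
\item from $0$ to $-\infty$ on the lowest interval;
\item from $+\infty$ to $-\infty$ on the middle interval;
\item from $+\infty$ to $0$ on the highest interval.
\end{itemize}
So there is exactly one solution below $\min(\xi,\eta)$ and exactly one between $\xi$ and $\eta$, and the latter is $\lambda_1(\Qt)$.

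For a concrete check, take $t=K=L=2$, $\xi=0$, $\eta=1$. Then \eqref{eq:key} becomes $2(x^2+x-1)$, whose largest root is $0.618\ldots\in(0,1)$.
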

	
	\begin{proof}
		Since $K=a(r-\xi)$ we have $q_1=r-\frac Ka-\frac Kt=\xi-\frac Kt$, and likewise
		$q_2=\eta-\frac Lt$. Therefore
		\[
		t\det(xI-\Qt)=t\Bigl(x-\xi+\tfrac Kt\Bigr)\Bigl(x-\eta+\tfrac Lt\Bigr)-\tfrac{KL}{t},
		\]
		and expanding the product gives
		$t(x-\xi)(x-\eta)+(x-\xi)L+(x-\eta)K+\frac{KL}{t}-\frac{KL}{t}$, which
		is~\eqref{eq:key}. Dividing~\eqref{eq:key} by $t(x-\xi)(x-\eta)$ gives the second
		form.
	\end{proof}
	
	The following consequence is the source of the closed expression~\eqref{eq:psi}.
	
	\begin{cor}\label{cor:diag}
		If $\xi=\eta$, then~\eqref{eq:key} factors as
		$t(x-\xi)\bigl[(x-\xi)+\frac{K+L}{t}\bigr]$, so $\lambda_1(\Qt)=\xi$, whatever
		$K$ and $L$ are.
	\end{cor}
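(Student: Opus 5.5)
The plan is to substitute $\eta=\xi$ into the identity \eqref{eq:key} of Lemma~\ref{lem:key} and then read off the two roots. With $\eta=\xi$ the right-hand side becomes
\[
t(x-\xi)^2+(x-\xi)L+(x-\xi)K=t(x-\xi)\Bigl[(x-\xi)+\tfrac{K+L}{t}\Bigr].
\]
This is the claimed factorization. It holds for every value of $x$, so it is an identity of polynomials in $x$.

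Since $t\ge 2>0$, the roots of $\det(xI-\Qt)$ are exactly $x=\xi$ and $x=\xi-\frac{K+L}{t}$. The next step is to check that $\xi$ is the larger of the two. Here we use $K=e(A,S)\ge0$ and $L=e(B,S)\ge0$, which gives $\xi-\frac{K+L}{t}\le\xi$. Because $\Qt$ is a $2\times 2$ matrix, its characteristic polynomial has exactly these two roots, so $\lambda_1(\Qt)=\xi$.

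No information about $K$ and $L$ beyond their nonnegativity is used. This explains the phrase ``whatever $K$ and $L$ are''. In fact $K,L>0$ in our setting, because $G$ is connected, but that is not needed. There is no real obstacle in this argument. The only point requiring care is the sign of $K+L$, and it is needed only to identify which root is the largest.
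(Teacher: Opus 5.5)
Your proposal is correct and matches what the paper intends, since the paper gives no separate proof beyond the displayed factorization: substitute $\eta=\xi$ into~\eqref{eq:key}, factor out $t(x-\xi)$, and observe that the other root $\xi-\frac{K+L}{t}$ is at most $\xi$ because $K,L\ge0$. The paper leaves that last comparison implicit, and your argument supplies it.
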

	
	The graphs we construct in this section have the following shape, and we record once
	and for all what their spectra look like. Call $G$ a \emph{clique pair over $S$} if $V(G)$ is
	partitioned into $A$, $S$ and $B$ so that $S$ is independent, no edge joins $A$ to $B$,
	both $G[A]$ and $G[B]$ are complete, and the bipartite graphs between $A$ and $S$ and
	between $B$ and $S$ are biregular. We write $N_A$ and $N_B$ for their biadjacency
	matrices, with $A$-degree $k_A=r+1-a$ and $S$-degree $d_A$, and likewise for $B$.
	
	\begin{lem}\label{lem:spec}
		Let $G$ be a connected $r$-regular clique pair over $S$ with $|S|=t$, and let
		\[
		\nu=\lambda_2\bigl(N_A^{\top}N_A\bigr)+\lambda_2\bigl(N_B^{\top}N_B\bigr),
		\qquad \theta=\frac{-1+\sqrt{1+4\nu}}{2}.
		\]
		If $\theta\le\lambda_1(\Qt)$, then $\lambda_2(G)=\lambda_1(\Qt)$. Here
		$N_A^{\top}N_A$ has constant row sums $k_Ad_A$, so that
		$\lambda_2\bigl(N_A^{\top}N_A\bigr)\le k_Ad_A$, with
		$\lambda_2\bigl(N_A^{\top}N_A\bigr)=0$ when every vertex of $A$ is adjacent to
		every vertex of $S$; the same holds for $B$.
	\end{lem}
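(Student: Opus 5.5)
We will compute the full spectrum of $A(G)$ by splitting $\mathbb R^{V(G)}$ into invariant pieces. Write vectors as $(x_A,x_S,x_B)$. Since $G[A]=K_a$ and $G[B]=K_b$, we have
\[
A(G)=\begin{pmatrix} J-I & N_A & 0\\ N_A^{\top} & 0 & N_B^{\top}\\ 0 & N_B & J-I\end{pmatrix},
\]
where $N_A$ is $a\times t$ and $N_B$ is $b\times t$. The space $U$ of vectors constant on each of $A,S,B$ is invariant, because the partition $\{A,S,B\}$ is equitable: every vertex of $A$ has $k_A$ neighbors in $S$, and every vertex of $S$ has $d_A$ neighbors in $A$ and $d_B$ in $B$. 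So $A(G)|_U$ is the quotient matrix $Q$, with eigenvalues $r$ and, by Theorem~\ref{thm:reduce}, $\lambda_1(\Qt)\ge\lambda_2(\Qt)$. It then suffices to show that every eigenvalue of $A(G)$ on $U^{\perp}$ is at most $\max\{\theta,-1\}\le\lambda_1(\Qt)$. The inequality $-1\le\lambda_1(\Qt)$ is immediate from $\lambda_1(\Qt)\ge\lambda_2(\Qt)$ and $\mathrm{tr}\,\Qt=q_1+q_2$; if needed, use the interlacing $\lambda_1(\Qt)\ge\lambda_2(G)\ge\lambda_2(K_a\cup K_b)$ restricted appropriately.

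On $U^{\perp}$, an eigenvector $x$ with eigenvalue $\lambda$ has $x_A\perp\mathbf 1$, $x_S\perp\mathbf 1$ and $x_B\perp\mathbf 1$. If $N_A$ has column sums $d_A$, then $N_A x_S\perp\mathbf 1_A$ whenever $x_S\perp\mathbf 1$, and $N_A^{\top}x_A\perp\mathbf 1_S$ when $x_A\perp\mathbf 1$, because its row sums are the constant $k_A$. Hence $J$ acts as $0$ on $U^{\perp}$, and the eigen-equations become
\[
-x_A+N_Ax_S=\lambda x_A,\qquad N_A^{\top}x_A+N_B^{\top}x_B=\lambda x_S,\qquad N_Bx_S-x_B=\lambda x_B .
\]
If $\lambda\ne-1$, eliminate $x_A=N_Ax_S/(\lambda+1)$ and $x_B=N_Bx_S/(\lambda+1)$ to get
\[
(N_A^{\top}N_A+N_B^{\top}N_B)\,x_S=\lambda(\lambda+1)\,x_S .
\]
Here $x_S\ne0$ (otherwise $x_A=x_B=0$) and $x_S\perp\mathbf 1$. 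The matrix $M=N_A^{\top}N_A+N_B^{\top}N_B$ is symmetric PSD with $\mathbf 1$ as an eigenvector, so $\lambda(\lambda+1)$ is at most the largest eigenvalue of $M$ on $\mathbf 1^{\perp}$. By Weyl, this is at most $\lambda_2(N_A^{\top}N_A)+\lambda_2(N_B^{\top}N_B)=\nu$, since both summands share the eigenvector $\mathbf 1$ and we restrict to $\mathbf 1^{\perp}$. Solving $\lambda^2+\lambda\le\nu$ gives $\lambda\le\theta$. Eigenvalues equal to $-1$ come from $x_S=0$ and do not exceed $\theta$, since $\theta\ge0$. Thus every eigenvalue on $U^{\perp}$ is at most $\theta\le\lambda_1(\Qt)$.

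Therefore the eigenvalues of $G$ are $r$, $\lambda_1(\Qt)$, $\lambda_2(\Qt)$, and values at most $\theta$, so $\lambda_2(G)=\lambda_1(\Qt)$. Here we use that $r$ is simple by connectivity, or simply that the second largest value in the list is $\lambda_1(\Qt)$, since $\lambda_1(\Qt)<r$ by Theorem~\ref{thm:reduce}. The auxiliary claims follow directly. $N_A^{\top}N_A$ has row sums $N_A^{\top}N_A\mathbf 1=N_A^{\top}k_A\mathbf 1=k_Ad_A\mathbf 1$, and since it is nonnegative, its spectral radius is $k_Ad_A$, so $\lambda_2\le k_Ad_A$. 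When $N_A=J$, it equals $aJ$, which has rank one, so $\lambda_2=0$. The main point to be careful about is the invariance of $U^{\perp}$, which relies on the biregularity of both bipartite pieces. This is exactly what makes $J$ vanish and lets $N_A$ preserve $\mathbf 1^{\perp}$.
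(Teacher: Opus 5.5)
Your proof is correct and is essentially the paper's argument. You split off the space of vectors constant on $A,S,B$, which carries $r$ and the spectrum of $\Qt$; on its orthogonal complement you eliminate $x_A,x_B$ to get $\lambda(\lambda+1)\le\nu$, and your Weyl step on $\mathbf 1^{\perp}$ is the paper's Rayleigh-quotient bound in different words.

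Two small points, neither of which affects the proof. The aside justifying $-1\le\lambda_1(\Qt)$ is unnecessary, since you already use $\theta\ge0$, and the interlacing inequality you quote there runs the wrong way: interlacing gives $\lambda_2(G)\ge\lambda_1(\Qt)$, not the reverse. Also, Theorem~\ref{thm:reduce} alone gives only $\lambda_1(\Qt)\le r$; strict inequality needs Perron--Frobenius for the irreducible $Q$, although $\lambda_1(\Qt)\le r$ already suffices for your conclusion.
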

	
	\begin{proof}
		The partition $\{A,S,B\}$ is equitable, since $G[A]$ and $G[B]$ are complete and the two
		bipartite graphs are biregular. Its quotient matrix $Q$ is nonnegative and irreducible,
		because $G$ is connected, and all its row sums equal $r$; hence $r$ is a simple eigenvalue
		of $Q$, and the eigenvalues of $Q$ are $r$ together with those of $\Qt$ by
		Theorem~\ref{thm:reduce}. In particular $\lambda_1(\Qt)<r$, and $\lambda_1(\Qt)$ is an
		eigenvalue of $G$.
		
		Since the partition is equitable, the space $U$ of vectors summing to zero on each of $A$,
		$S$ and $B$ is invariant under $A(G)$, and every eigenvalue of $G$ is either an eigenvalue
		of $Q$ or an eigenvalue of the restriction of $A(G)$ to $U$. Let $\lambda$ be such a
		restricted eigenvalue, with eigenvector $(x,w,y)$ written along $A$, $S$, $B$. As $G[A]$
		and $G[B]$ are complete and $S$ is independent,
		\[
		-x+N_Aw=\lambda x,\qquad -y+N_Bw=\lambda y,\qquad N_A^{\top}x+N_B^{\top}y=\lambda w .
		\]
		Both $N_A^{\top}N_A$ and $N_B^{\top}N_B$ are positive semidefinite, so $\nu\ge0$ and
		hence $\theta\ge0$. In particular, if $\lambda=-1$, then $\lambda\le\theta$. Otherwise
		$x=\frac{1}{\lambda+1}N_Aw$ and $y=\frac{1}{\lambda+1}N_Bw$, so that
		\[
		\bigl(N_A^{\top}N_A+N_B^{\top}N_B\bigr)w=\lambda(\lambda+1)\,w ,
		\]
		and $w\ne0$, since $w=0$ would force $x=y=0$. Now $N_A^{\top}N_A$ is symmetric and
		nonnegative with constant row sums $k_Ad_A$, so the all-ones vector is a Perron
		eigenvector of it, and likewise for $N_B^{\top}N_B$. As $w$ is orthogonal to the all-ones
		vector, the Rayleigh quotient of each of the two matrices at $w$ is at most its second
		largest eigenvalue, whence $\lambda(\lambda+1)\le\nu$ and therefore $\lambda\le\theta$.
		
		Thus every eigenvalue of $G$ other than $r$ is at most $\max\{\lambda_1(\Qt),\theta\}$,
		and $\lambda_1(\Qt)$ is one of them. If $\theta\le\lambda_1(\Qt)$, then
		$\lambda_2(G)=\lambda_1(\Qt)$.
		
		For the last statement, if every vertex of $A$ is adjacent to every vertex of $S$, then
		$N_A$ is the all-ones matrix and $N_A^{\top}N_A=aJ_t$, whose second largest eigenvalue
		is $0$.
	\end{proof}
	
	\subsection{Reduction to one variable}
	
	Put
	\begin{equation}\label{eq:F}
		\mathcal F(r,t)=\Bigl\{(\xi,\eta)\in\mathbb R^2:\ r-t\le\xi\le\eta\le r,\ \
		(\xi+1)(r-\xi)+(\eta+1)(r-\eta)= rt\Bigr\},
	\end{equation}
	and for $(\xi,\eta)\in\mathcal F(r,t)$ let $\Lambda(\xi,\eta)$ be the largest root of
	\begin{equation}\label{eq:quad}
		t(x-\xi)(x-\eta)+(x-\xi)(\eta+1)(r-\eta)+(x-\eta)(\xi+1)(r-\xi)=0 .
	\end{equation}
	The aim of this subsection is the following.
	
	\begin{prop}\label{prop:arc}
		Let $G$ be a connected $r$-regular graph with $\kappa(G)\le t$, where
		$2\le t<\frac{r+2}{2}$. Then
		\[
		\lambda_2(G)\;\ge\;\min_{(\xi,\eta)\in\mathcal F(r,t)}\Lambda(\xi,\eta).
		\]
		Moreover $\mathcal F(r,t)$ is an arc, and the minimum is attained only at its two
		endpoints, that is, where $\xi=\eta$ or where $\xi=r-t$.
	\end{prop}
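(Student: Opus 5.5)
The plan is to combine the interlacing bound \eqref{eq:Qt} with Lemma~\ref{lem:key}, and to reduce the four integer parameters $a,b,K,L$ to the two real parameters $\xi,\eta$. By symmetry of $A$ and $B$ I assume $\xi\le\eta$. I first record the constraints.

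\begin{enumerate}
\item A vertex of $A$ has at most $t$ neighbors in $S$, so its degree in $G[A]$ is at least $r-t$. Hence $\xi\ge r-t$, and likewise $\eta\ge r-t$. Trivially $\eta\le r$.
\item The average degree of $G[A]$ satisfies $\xi\le a-1$. Hence $K=a(r-\xi)\ge(\xi+1)(r-\xi)=:K_0(\xi)$, and likewise $L\ge L_0(\eta):=(\eta+1)(r-\eta)$.
\item Every vertex of $S$ has degree $r$, so $K+L\le rt$.
\end{enumerate}

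Write $f(x)=t(x-\xi)(x-\eta)+(x-\xi)L+(x-\eta)K$ for the polynomial in \eqref{eq:key}. Then $f(\xi)=-(\eta-\xi)K\le0$, so $\lambda:=\lambda_1(\Qt)\ge\xi$. Moreover $\partial f/\partial K=x-\eta$ and $\partial f/\partial L=x-\xi$.

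The first step fixes $\xi,\eta$ and varies $(K,L)$ over the region $K\ge K_0$, $L\ge L_0$, $K+L\le rt$. At the root $\lambda$, the two derivatives show that $\lambda$ does not increase when $L$ decreases and $K$ increases. Concretely, move along $K+L=\text{const}$ from $(K,L)$ to $(K+L-L_0,\,L_0)$. Then lower $L$ further and raise $K$ until $K=rt-L_0$, using the slack $K+L\le rt$. Near $x=\lambda$ the net effect on $f$ is $\Delta K(\lambda-\eta)+\Delta L(\lambda-\xi)$, and I expect to show this is nonnegative when $\lambda\le\eta$. The case $\lambda>\eta$ needs a separate check, presumably that the bound is then already large.

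This reduces the problem to $L=L_0(\eta)$ and $K=rt-L_0(\eta)\ge K_0(\xi)$. I then want to push $\xi$ upward until $K_0(\xi)+L_0(\eta)=rt$. This is a move in which $K$ stays fixed and $\xi$ increases, and I must check that it does not raise $\lambda$. If instead $\xi$ is decreased, so that the root moves left, the constraint $\xi\ge r-t$ may be hit. Either way the point lands on $\mathcal F(r,t)$ with $K=K_0$ and $L=L_0$, where $f$ becomes exactly \eqref{eq:quad} and $\lambda=\Lambda(\xi,\eta)$. This yields $\lambda_2(G)\ge\min_{\mathcal F}\Lambda$. The hypothesis $t<\frac{r+2}{2}$ should be exactly what guarantees that the relevant part of the curve is nonempty and lies in the box $r-t\le\xi\le\eta\le r$.

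The second step is the geometry. Set $c=\frac{r-1}{2}$. Since $(\xi+1)(r-\xi)=\frac{(r+1)^2}{4}-(\xi-c)^2$, the defining equation of $\mathcal F$ is the circle
\[
(\xi-c)^2+(\eta-c)^2=\frac{(r+1)^2}{2}-rt .
\]
Intersected with $\xi\le\eta$ and $\xi\ge r-t$, this circle gives a connected arc. One endpoint lies on the diagonal $\xi=\eta$, and solving there gives $\xi=\psym(r,t)$. The other endpoint has $\xi=r-t$, and I will check that its $\eta$-coordinate is at most $r$; this gives $\eta=\eta_0$. I parametrize the arc by the angle $\phi$, writing $\xi=c+\rho\cos\phi$ and $\eta=c+\rho\sin\phi$.

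The third step, which I expect to be the main obstacle, is to show that $\Lambda$ has no interior minimum on the arc. My first attempt is implicit differentiation of \eqref{eq:quad} with respect to $\phi$, showing that $d\Lambda/d\phi=0$ forces $d^2\Lambda/d\phi^2<0$; that is, $\Lambda$ is strictly quasi-concave along the arc. A cleaner alternative is to eliminate $\phi$: treat \eqref{eq:quad} together with the circle as a curve in $(\xi,\eta,x)$-space. For each level $x=\lambda_\ast$, the set of arc points with $\Lambda\ge\lambda_\ast$ is described by a single polynomial inequality in $\phi$. If I can show it has at most two sign changes, then the sublevel sets are sub-arcs containing endpoints, and the minimum is attained only at $\xi=\eta$ or $\xi=r-t$. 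In either approach, the sign analysis of a polynomial in $r,t,\phi$ is the delicate part.
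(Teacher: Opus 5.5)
There is a genuine gap: your first reduction step runs in the wrong direction. For $\xi<\eta$ you have $f(\xi)=-(\eta-\xi)K<0<(\eta-\xi)L=f(\eta)$. So the largest root always satisfies $\xi<\lambda<\eta$ and $f'(\lambda)>0$, and the case $\lambda>\eta$ you set aside never occurs. At $x=\lambda$ you have $\partial f/\partial K=\lambda-\eta<0$ and $\partial f/\partial L=\lambda-\xi>0$. Hence the root is strictly \emph{increasing} in $K$ and strictly decreasing in $L$.

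Your move to $(K+L-L_0,L_0)$ and then to $K=rt-L_0$ has $\Delta K\ge0\ge\Delta L$. It therefore changes $f(\lambda)$ by $\Delta K(\lambda-\eta)+\Delta L(\lambda-\xi)<0$, not by a nonnegative amount, and the root moves right. You have gone to the point of $\{K\ge K_0,\ L\ge L_0,\ K+L\le rt\}$ where the root is \emph{largest}, so a lower bound obtained there says nothing about $\lambda_1(\Qt)$. Your second move has the same defect. For fixed $K,L,\eta$ the root is increasing in $\xi$. Also, raising $\xi$ lowers $K_0(\xi)$, which takes you away from $K_0+L_0=rt$ rather than onto it.

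The correct reduction, which is the paper's, keeps $\xi$ fixed and goes the other way. Lower $K$ to $K_0(\xi)$ and raise $L$ to $rt-K_0(\xi)$. Then lower $\eta$ until $(\eta+1)(r-\eta)=rt-K_0(\xi)$; this lowers the root, because $\partial f/\partial\eta=-t(\lambda-\xi)-K<0$. This lands on $\mathcal F(r,t)$ unless $\eta$ reaches $\xi$ first. In that case Corollary~\ref{cor:diag} gives root $\xi$, and $2K_0(\xi)\le rt$ forces $\xi\ge\psym(r,t)$; your plan does not address this diagonal case. The hypothesis $t<\frac{r+2}{2}$ enters by placing $\xi,\eta$ in $[\frac{r-1}{2},r]$, where $u\mapsto(u+1)(r-u)$ is decreasing, and that is what makes these moves well defined.

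Your third step is left as an open sign analysis, and it misses the observation that makes it short. Write $p=\xi-\frac{r-1}{2}$, $q=\eta-\frac{r-1}{2}$, $\sigma=p+q$. On the arc, $K_0=\frac{(r+1)^2}{4}-p^2$, $L_0=\frac{(r+1)^2}{4}-q^2$, $K_0+L_0=rt$ and $pq=\frac{\sigma^2-R^2}{2}$. So \eqref{eq:quad}, being symmetric in $p$ and $q$, depends on the arc point only through $\sigma$. After the shift $y=x-\frac{r-1}{2}$ and division by $t$, it becomes $y^2+(r-\sigma)y+H(\sigma)=0$. Its discriminant is a positive cubic $\Delta(\sigma)$ with $\Delta''(\sigma)=-2-\frac{12\sigma}{t}<0$, because $\sigma>0$ on the arc. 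Hence $\Lambda=\frac{\sigma-1+\sqrt{\Delta(\sigma)}}{2}$ is strictly concave in $\sigma$. The arc lies between polar angles $\frac{\pi}{4}$ and $\frac{\pi}{2}$, so $\sigma=\sqrt2R\cos(\varphi-\frac{\pi}{4})$ is strictly monotone along it. A strictly concave function on an interval is minimized only at an endpoint, which replaces both your implicit-differentiation and sign-change approaches.
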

	
	The endpoints of the arc are identified by the following lemma, which we prove first.
	
	\begin{lem}\label{lem:concave}
		Put $c=\frac{r+1}{2}$, $p=\xi-\frac{r-1}{2}$, $q=\eta-\frac{r-1}{2}$ and
		$\sigma=p+q$. Then:
		\begin{enumerate}
			\item[$(i)$] $(\xi+1)(r-\xi)=c^{2}-p^{2}$ and $(\eta+1)(r-\eta)=c^{2}-q^{2}$, and the
			constraint in~\eqref{eq:F} is the circle $p^{2}+q^{2}=R^{2}$, where
			$R^{2}=\frac{(r+1)^{2}}{2}-rt>\frac12$;
			\item[$(ii)$] in the variable $y=x-\frac{r-1}{2}$, equation~\eqref{eq:quad} divided by $t$
			becomes
			\[
			y^{2}+(r-\sigma)y+H(\sigma)=0,\qquad
			H(\sigma)=\frac{\sigma^{2}-R^{2}}{2}\Bigl(1+\frac{\sigma}{t}\Bigr)-\frac{c^{2}\sigma}{t},
			\]
			so that $\Lambda$ depends only on $\sigma$;
			\item[$(iii)$] on the feasible arc one has $\sigma>0$ and
			$\Delta''(\sigma)=-2-\frac{12\sigma}{t}<0$, where
			$\Delta(\sigma)=(r-\sigma)^{2}-4H(\sigma)$; consequently
			$\Lambda(\sigma)=\frac{(\sigma-r)+\sqrt{\Delta(\sigma)}}{2}+\frac{r-1}{2}$ is
			strictly concave in $\sigma$.
		\end{enumerate}
		In particular the minimum of $\Lambda$ over $\mathcal F(r,t)$ is attained only at an
		endpoint of the $\sigma$-interval, that is, either at $\xi=\eta$ or at $\xi=r-t$.
	\end{lem}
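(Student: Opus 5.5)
The plan is to verify $(i)$ and $(ii)$ by direct substitution, then locate the feasible arc on the circle to get $\sigma>0$, and finish with a concavity argument.

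For $(i)$, write $\xi=p+\frac{r-1}{2}$. Then $\xi+1=c+p$ and $r-\xi=c-p$, which gives $(\xi+1)(r-\xi)=c^2-p^2$, and likewise for $\eta$. The constraint in~\eqref{eq:F} becomes $2c^2-p^2-q^2=rt$, that is, $p^2+q^2=R^2$ with $R^2=2c^2-rt$. The inequality $R^2>\frac12$ is equivalent to $r^2+2r-2rt>0$, i.e.\ to $t<\frac{r+2}{2}$.

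For $(ii)$, substitute $x=y+\frac{r-1}{2}$, so that $x-\xi=y-p$ and $x-\eta=y-q$. Equation~\eqref{eq:quad} divided by $t$ reads
\[
(y-p)(y-q)+\tfrac1t\bigl[(y-p)(c^2-q^2)+(y-q)(c^2-p^2)\bigr]=0 .
\]
The coefficient of $y$ is $-\sigma+\frac{2c^2-p^2-q^2}{t}=r-\sigma$, using the circle equation. The constant term is $pq-\frac{c^2\sigma-pq\sigma}{t}=pq\bigl(1+\frac\sigma t\bigr)-\frac{c^2\sigma}{t}$. Inserting $pq=\frac{\sigma^2-R^2}{2}$ yields $H(\sigma)$, so the equation, and hence $\Lambda$, depends on $(p,q)$ only through $\sigma$.

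For $(iii)$, first pin down the arc. The conditions $\xi\ge r-t$ and $\xi\le\eta$ say $p\ge c-t>-\frac12$ and $q\ge p$.
\begin{itemize}
\item If $q<0$, then $-\frac12<p\le q<0$, which forces $p^2+q^2<\frac12<R^2$, a contradiction. So the arc lies on the upper half of the circle.
\item At the endpoint $p=c-t$, a short computation gives $q^2=R^2-(c-t)^2=c^2-t(t-1)$. This matches $\eta_0$ in~\eqref{eq:psibd}.
\item If $t>c$, then $q^2-(t-c)^2=t(r+2-2t)>0$, so $\sigma=c-t+q>0$ there. If $t\le c$, positivity at this endpoint is immediate.
\item At the other endpoint $p=q=R/\sqrt2$, we have $\sigma=\sqrt2R>0$.
\end{itemize}
Parametrize the circle by angle, so that $\sigma=\sqrt2R\cos(\varphi-\pi/4)$. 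The arc runs from $\varphi=\pi/4$ to the angle of the endpoint $(c-t,q)$, which lies in $[\pi/4,3\pi/4)$ because $\sigma>0$ there. Hence $\sigma$ is strictly monotone along the arc and positive on all of it. This also shows that the arc is mapped bijectively onto a $\sigma$-interval whose endpoints correspond exactly to $\xi=\eta$ and $\xi=r-t$.

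Next compute $\Delta$. The second derivative of $(\sigma^2-R^2)(1+\sigma/t)$ is $2+6\sigma/t$, and $(r-\sigma)^2$ contributes $2$, so $\Delta''(\sigma)=2-2(2+6\sigma/t)=-2-\frac{12\sigma}{t}$. This is negative because $\sigma>0$.

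The main obstacle is to show that $\Delta>0$ on the interval, so that $\sqrt{\Delta}$ is a real, strictly concave function of $\sigma$. I would argue from~\eqref{eq:quad}: evaluated at $x=\xi$ its left side is $(\xi-\eta)(\xi+1)(r-\xi)\le0$. The quadratic has positive leading coefficient, so it has real roots, with the largest root at least $\xi$. Strictness of $\Delta>0$ requires care when $\xi=\eta$. There I would read off from the factorization in Corollary~\ref{cor:diag} that the roots are $\xi$ and $\xi-(K+L)/t$, which are distinct because $K+L=rt>0$.

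Given $\Delta>0$, the composition of the strictly increasing concave function $\sqrt{\cdot}$ with the strictly concave positive function $\Delta$ is strictly concave. Adding the linear term $\frac{\sigma-r}{2}$ preserves this, so $\Lambda(\sigma)$ is strictly concave. A strictly concave function on a closed interval attains its minimum only at an endpoint. By the bijection between the arc and the $\sigma$-interval, these endpoints are exactly $\xi=\eta$ and $\xi=r-t$.
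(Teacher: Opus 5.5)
Your argument is correct and follows essentially the paper's route. You verify $(i)$ and $(ii)$ by direct substitution, parametrize the arc as $\sigma=\sqrt2R\cos(\varphi-\frac{\pi}{4})$, compute $\Delta''=-2-\frac{12\sigma}{t}$, and compose with the increasing concave function $\sqrt{\cdot}$. The one variation is in proving $\Delta>0$. You use the sign of the left side of~\eqref{eq:quad} at $x=\xi$, together with the factorization of Corollary~\ref{cor:diag} on the diagonal. The paper instead uses the identity $\Delta=\bigl(\xi-\eta-\frac{K-L}{t}\bigr)^{2}+\frac{4KL}{t^{2}}$. Both work; yours needs $(\xi+1)(r-\xi)>0$ when $\xi<\eta$, which holds because $\xi<\eta\le r$.

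There is one omitted check. The set $\mathcal F(r,t)$ also imposes $\eta\le r$, that is, $q\le c$, and you never verify that this constraint is inactive on the arc. It matters. If it cut the arc, the second endpoint would be at $\eta=r$ rather than at $\xi=r-t$. Then your claim that the arc runs from $\varphi=\frac{\pi}{4}$ to the point $(c-t,q)$ would fail, and so would the final identification of the endpoints.

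The fix is short and uses your own computation. Since $t$ is an integer, $t<\frac{r+2}{2}$ gives $t\le c$, so $p\ge c-t\ge 0$. Hence the arc lies in $\frac{\pi}{4}\le\varphi\le\frac{\pi}{2}$, where $q=R\sin\varphi$ increases with $\varphi$. So the largest value of $q$ on the arc occurs at $p=c-t$, where your bullet gives $q^{2}=c^{2}-t(t-1)\le c^{2}$. Integrality also makes your separate case $t>c$ vacuous. This is exactly the remark with which the paper closes its proof.
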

	
	\begin{proof}
		Throughout, write $K=(\xi+1)(r-\xi)$ and $L=(\eta+1)(r-\eta)$ for points of
		$\mathcal F(r,t)$.
		
		$(i)$ We have $K=(c+p)(c-p)=c^{2}-p^{2}$ and likewise $L=c^{2}-q^{2}$, so
		$K+L=2c^{2}-p^{2}-q^{2}$ and $K+L=rt$ becomes $p^{2}+q^{2}=2c^{2}-rt=R^{2}$. Since
		$t<\frac{r+2}{2}$ we get $R^{2}>\frac{(r+1)^{2}}{2}-\frac{r(r+2)}{2}=\frac12$.
		
		$(ii)$ With $x-\xi=y-p$ and $x-\eta=y-q$, expanding~\eqref{eq:quad} and using
		$2c^{2}-p^{2}-q^{2}=rt$ together with $pq=\frac{\sigma^{2}-R^{2}}{2}$ produces the
		displayed quadratic; every coefficient depends on $(p,q)$ only through $\sigma$.
		
		$(iii)$ Order $p\le q$. Then $q\ge\frac{R}{\sqrt2}>\frac12$ by $(i)$, while
		$p\ge(r-t)-\frac{r-1}{2}=c-t>-\frac12$ because $t<\frac{r+2}{2}$; hence $\sigma>0$.
		Expanding,
		\[
		\Delta(\sigma)=(r-\sigma)^{2}-2(\sigma^{2}-R^{2})-\frac2t\bigl(\sigma^{3}-R^{2}\sigma\bigr)
		+\frac{4c^{2}\sigma}{t},
		\]
		so $\Delta''(\sigma)=-2-\frac{12\sigma}{t}<0$. Also $\Delta>0$, since $t^{2}\Delta$ is the discriminant
		of~\eqref{eq:quad} and a direct computation gives
		$\Delta=\bigl(\xi-\eta-\frac{K-L}{t}\bigr)^{2}+\frac{4KL}{t^{2}}$. Since $u\mapsto\sqrt u$ is strictly concave and
		increasing, $\sqrt\Delta$ is strictly concave, and adding an affine function preserves
		strict concavity. A strictly concave function on an interval attains its minimum only at
		an endpoint.
		
		It remains to identify the endpoints. Since $p\ge c-t\ge0$ and $p\le q$, the arc lies in
		the part of the circle between the angles $\frac{\pi}{4}$ and $\frac{\pi}{2}$, where
		$\sigma=\sqrt2R\cos\bigl(\varphi-\frac{\pi}{4}\bigr)$ is strictly monotone in the angle
		$\varphi$; thus $\sigma$ parametrizes the arc, and its endpoints correspond to the endpoints
		of the $\sigma$-interval. Moving along the arc from $p=q$ toward smaller $p$,
		the first constraint met is $p\ge c-t$: at $p=c-t$ we have
		$q^{2}=R^{2}-(c-t)^{2}=c^{2}-t(r+t-2c)=c^{2}-t(t-1)\le c^{2}$, so $q\le c$ and the
		constraint $\eta\le r$ is not active there. Hence the endpoints are $\sigma=\sqrt2R$,
		where $p=q$, and the point where $\xi=r-t$.
	\end{proof}
	
	\begin{proof}[Proof of Proposition~\ref{prop:arc}]
	By Lemma~\ref{lem:key}, $\lambda_1(\Qt)$ is the largest root of
	\[
	g(x)=t(x-\xi)(x-\eta)+(x-\xi)L+(x-\eta)K ,
	\]
	which involves $a$ and $b$ only through $K=a(r-\xi)$ and $L=b(r-\eta)$. We therefore
	record the constraints in the coordinates $(\xi,\eta,K,L)$. Since $G$ is connected,
	$K$ and $L$ are positive, so $\xi<r$ and $\eta<r$. Every vertex of $A$ has at most
	$t$ neighbors in $S$, so $K\le at$, which reads
	\begin{equation}\label{eq:c1}
		\xi\ge r-t ;
	\end{equation}
	and $\xi\le a-1$ gives $a\ge\xi+1$, which reads
	\begin{equation}\label{eq:c2}
		K\ge(\xi+1)(r-\xi),
	\end{equation}
	with equality exactly when $G[A]=K_{a}$. The same two statements hold for $B$ with
	$(\eta,L,b)$ in place of $(\xi,K,a)$. Finally
	\begin{equation}\label{eq:c3}
		K+L\le rt ,
	\end{equation}
	with equality exactly when $S$ is independent.
	
	Since $t$ is an integer, $t<\frac{r+2}{2}$ is the same as $t\le\frac{r+1}{2}$, so
	$r-t\ge\frac{r-1}{2}$ and, by~\eqref{eq:c1}, both $\xi$ and $\eta$ lie in the interval
	$[\frac{r-1}{2},r]$ on which $u\mapsto(u+1)(r-u)$ is decreasing. This is how the hypothesis
	$t<\frac{r+2}{2}$ enters here; equivalently, it is what prevents
	$\xi=\eta=r-t$, since that case would force $2t(r+1-t)\le rt$ by~\eqref{eq:c2}
	and~\eqref{eq:c3}, and it is why the two regimes of Theorem~\ref{thm:main} meet at
	$\frac{r+2}{2}$. Relabel $A$ and $B$, if necessary, so that
	$\xi\le\eta$.
	
	\medskip
	\noindent\emph{Claim.} Let $\xi<\eta$ and $K,L>0$, and let $\lambda$ be the largest root of
	$g$. Then $\xi<\lambda<\eta$, and $\lambda$ is strictly increasing in $K$, strictly
	decreasing in $L$, and strictly increasing in $\eta$.
	
	\smallskip
	\noindent\emph{Proof of the Claim.} We have $g(\xi)=(\xi-\eta)K<0$ while
	$g(\eta)=(\eta-\xi)L>0$, so the upward parabola $g$ has a root in $(\xi,\eta)$ and stays
	positive from $\eta$ on. Hence its largest root $\lambda$ lies in $(\xi,\eta)$ and
	$g'(\lambda)>0$. Now
	\[
	\frac{\partial g}{\partial K}\Big|_{\lambda}=\lambda-\eta<0,\qquad
	\frac{\partial g}{\partial L}\Big|_{\lambda}=\lambda-\xi>0,\qquad
	\frac{\partial g}{\partial\eta}\Big|_{\lambda}=-t(\lambda-\xi)-K<0,
	\]
	and in each case the derivative of $\lambda$ is minus the displayed quantity divided by
	$g'(\lambda)$. This proves the Claim.
	
	\medskip
	Suppose first that $\xi=\eta$. Then $\lambda_1(\Qt)=\xi$ by
	Corollary~\ref{cor:diag}, while~\eqref{eq:c2} and~\eqref{eq:c3} give
	$2(\xi+1)(r-\xi)\le rt$. As $2(\psym+1)(r-\psym)=rt$ by~\eqref{eq:psi} and
	$u\mapsto(u+1)(r-u)$ is decreasing on $[\frac{r-1}{2},r]$, this forces
	$\xi\ge\psym(r,t)$. Since $(\psym,\psym)\in\mathcal F(r,t)$ and
	$\Lambda(\psym,\psym)=\psym$, again by Corollary~\ref{cor:diag}, we conclude that
	$\lambda_1(\Qt)\ge\Lambda(\psym,\psym)$.
	
	Suppose now that $\xi<\eta$. We modify $K$, $L$ and $\eta$ in three steps, keeping
	$\xi$ fixed; by the Claim no step increases the largest root of $g$.
	First replace $K$ by $K_{1}=(\xi+1)(r-\xi)$, which does not increase $K$
	by~\eqref{eq:c2}. Next replace $L$ by $L_{1}=rt-K_{1}$, which does not decrease $L$,
	since $L\le rt-K\le rt-K_{1}$ by~\eqref{eq:c3}; here $K_{1}\le t(r+1-t)$
	by~\eqref{eq:c1}, so $L_{1}\ge t(t-1)>0$. Finally put
	\[
	\eta_{1}=\min\bigl\{\eta'\ge\xi:\ (\eta'+1)(r-\eta')\le L_{1}\bigr\},
	\]
	the minimum being attained because $u\mapsto(u+1)(r-u)$ is decreasing on
	$[\xi,r]$, and replace $\eta$ by $\eta_{1}$; this does not increase $\eta$, since
	$(\eta+1)(r-\eta)\le L\le L_{1}$ by~\eqref{eq:c2} for $B$. For the last step
	the Claim applies on $(\xi,\eta]$, and the largest root of $g$ depends
	continuously on $\eta$, so the conclusion persists when $\eta_{1}=\xi$.
	
	If $\eta_{1}=\xi$, then the modified polynomial has equal roots
	$\xi=\eta_{1}$, so its largest root is $\xi$ by Corollary~\ref{cor:diag}, while
	$(\xi+1)(r-\xi)\le L_{1}=rt-(\xi+1)(r-\xi)$ forces $\xi\ge\psym(r,t)$
	exactly as above; hence $\lambda_1(\Qt)\ge\Lambda(\psym,\psym)$.
	
	If $\eta_{1}>\xi$, then $(\eta_{1}+1)(r-\eta_{1})=L_{1}$ by the minimality of
	$\eta_{1}$, so that
	\[
	K_{1}=(\xi+1)(r-\xi),\qquad L_{1}=(\eta_{1}+1)(r-\eta_{1}),\qquad
	K_{1}+L_{1}=rt ,
	\]
	and $r-t\le\xi<\eta_{1}\le r$. This is precisely the condition defining
	$\mathcal F(r,t)$ in~\eqref{eq:F}, and with these values $g$ becomes the left side
	of~\eqref{eq:quad}; hence $\lambda_1(\Qt)\ge\Lambda(\xi,\eta_{1})$.
	
	In every case $\lambda_2(G)\ge\lambda_1(\Qt)\ge\Lambda(\xi',\eta')$ for some
	$(\xi',\eta')\in\mathcal F(r,t)$, by~\eqref{eq:Qt}. Together with
	Lemma~\ref{lem:concave}, this proves the proposition.
	\end{proof}

	\subsection{Deciding between the two endpoints}
	
	\begin{prop}\label{prop:endpoint}
		Let $2\le t<\frac{r+2}{2}$. The value $\psym(r,t)$ of $\Lambda$ at the endpoint $\xi=\eta$
		of $\mathcal F(r,t)$ is smaller than, equal to, or larger than its value
		$\psi_{\mathrm{bd}}(r,t)$ at the endpoint $\xi=r-t$ according as $W(r,t)$ is positive,
		zero, or negative. In particular, the minimum in Proposition~\ref{prop:arc} is
		$\psym(r,t)$ if $W(r,t)\ge0$ and $\psi_{\mathrm{bd}}(r,t)$ if $W(r,t)\le0$.
	\end{prop}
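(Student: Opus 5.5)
First I will identify the two endpoint values explicitly. At the endpoint $\xi=\eta$ the constraint of~\eqref{eq:F} reads $2(\xi+1)(r-\xi)=rt$, whose root in $[\frac{r-1}{2},r]$ is $\psym(r,t)$, and $\Lambda(\psym,\psym)=\psym$ by Corollary~\ref{cor:diag}. At the endpoint $\xi=r-t$ we have $K=(\xi+1)(r-\xi)=t(r+1-t)$, and hence $L=rt-K=t(t-1)$. The equation $(\eta+1)(r-\eta)=t(t-1)$ with $\eta\ge\frac{r-1}{2}$ gives $\eta=\eta_0$ from~\eqref{eq:psibd}. Substituting $\xi=r-t$, $\eta=\eta_0$, $K=t(r+1-t)$ and $L=t(t-1)$ into~\eqref{eq:quad} and dividing by $t$, I expect to obtain the quadratic $h(x)=x^2+(t-\eta_0)x-\eta_0-(t-1)(r-t)$ of~\eqref{eq:psibd}, so that $\Lambda(r-t,\eta_0)=\psi_{\mathrm{bd}}(r,t)$. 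The last claim of the proposition then follows from Proposition~\ref{prop:arc}, once the comparison is established.

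For the comparison I will use the fact that $h$ opens upward and has a negative root. Moreover $\psym>0$, since $\psym\ge\frac{r-1}{2}$. Hence $\psym>\psi_{\mathrm{bd}}$, $\psym=\psi_{\mathrm{bd}}$ or $\psym<\psi_{\mathrm{bd}}$ according as $h(\psym)>0$, $h(\psym)=0$ or $h(\psym)<0$. So the task is to show that the sign of $h(\psym)$ is the opposite of the sign of $W(r,t)$. Write $s=\sqrt{(r+1)^2-2rt}$ and $D_0=\sqrt{(r+1)^2-4t(t-1)}$, so that $\psym=\frac{r-1+s}{2}$ and $\eta_0=\frac{r-1+D_0}{2}$. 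Then $2h(\psym)=P-D_0(\psym+1)$, where $P$ is a polynomial in $r,t,s$ that is linear in $s$ once $s^2$ is reduced. Since $\psym+1>0$, the sign of $h(\psym)$ is governed by comparing $P$ with $D_0(\psym+1)$.

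The main obstacle is removing the two square roots while keeping track of signs. I will proceed in two steps.
\begin{enumerate}
\item[(a)] Determine the sign of $P=\alpha+\beta s$. I would try to show $P>0$ on the range $u=r-2t+1\ge0$, using the bounds $\psym\ge r-t$ and $s\le r+1$. If $P\le0$, then $h(\psym)<0$ outright, and this case would have to agree with $W<0$; I would check that it does not occur.
\item[(b)] Given $P>0$, the sign of $h(\psym)$ equals the sign of $P^2-D_0^2(\psym+1)^2$. Both terms are linear in $s$ after reduction, so the difference has the form $\gamma+\delta s$ with $\gamma,\delta$ polynomials in $r,t$. A second squaring, again after checking signs, gives $\gamma^2-\delta^2 s^2$.
\end{enumerate}
I expect $\gamma^2-\delta^2 s^2$ to factor as a positive polynomial (a power of $t$ times a square or a positive factor in $u,t$) multiplied by $-W(r,t)$. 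Locating this factorization, and verifying the sign of $\gamma$ or $\delta$ needed for the second squaring (for example by a substitution $r=2t-1+u$ with positive coefficients, as in the comparison with $\zeta$), is the step I expect to be the most laborious.

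Equality occurs exactly when both squarings are equalities, which matches $W=0$. Since every step is an equivalence given the established signs, the trichotomy follows.
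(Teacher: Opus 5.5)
Your outline is the paper's proof. Your $P$ equals $\frac12\bigl(M+(r+2t-1)s\bigr)$ with $M=(r-2t)^2+4r-6t+3>0$, so step (a) is immediate. After the first squaring, your $\gamma+\delta s$ is $-\frac14(A+Bs)$ in the paper's notation, with $s=S_0$ and $B>0$. The factorization you anticipate holds exactly: $\gamma^2-\delta^2s^2=-t(r-2t+2)^2W(r,t)$.

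\textbf{The gap is in the second squaring.} From $(\gamma+\delta s)(\gamma-\delta s)=-t(r-2t+2)^2W$, the sign of $\gamma+\delta s$ is that of $-W$ only once you know that the conjugate $\gamma-\delta s=\frac14(Bs-A)$ is positive. This cannot be read off from the signs of $\gamma$ and $\delta$. Here $\delta<0$ always, but $\gamma$ changes sign:
\begin{itemize}
\item for $t=2$ one has $\frac14A=2r^3-10r^2+18r-18>0$ when $r\ge4$ (e.g.\ $A=88$ at $(4,2)$), so $\gamma<0$;
\item $W<0$ forces $A<-Bs<0$, so $\gamma>0$.
\end{itemize}
With only $\delta<0$ in hand, $W>0$ does give $h(\psym)<0$. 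For $W\le0$, however, you need $\gamma>0$, that is, you must rule out $A\ge0$ occurring together with $W\le0$. That is the inequality $Bs>A$. It is a genuine quantitative estimate needing a lower bound on $s$, not a sign check.

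The paper proves it as a separate step:
\begin{itemize}
\item $s>r+1-2t\ge0$, since $s^2-(r+1-2t)^2=2t(r+2-2t)>0$;
\item $B(r+1-2t)-A=4E$, where, with $u=r-2t+1$,
\[
E=(5t^2-10t+4)u^2+4t(t^2-3t+1)u+t(t-2)(4t-1).
\]
\end{itemize}
This $E$ is nonnegative for $u\ge0$ except at $(r,t)=(4,2)$. There $E=-4$, and one checks directly that $Bs=216>88=A$.

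Without this step, or a substitute bound on $s$, your claim that every step is an equivalence is unjustified. Concretely, the direction ``$W\le0\Rightarrow\psym\ge\psi_{\mathrm{bd}}$'' is not established. With it goes the assertion that the minimum is $\psi_{\mathrm{bd}}(r,t)$ when $W(r,t)\le0$.
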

	
	On the diagonal, Corollary~\ref{cor:diag} gives $\Lambda=\xi$, and the budget equation
	$2(\xi+1)(r-\xi)=rt$ yields $\xi=\psym(r,t)$ as in~\eqref{eq:psi}. On the other
	endpoint we have $\xi=r-t$, hence $K=t(r+1-t)$ and $L=rt-K=t(t-1)$, and $\eta$ is
	determined by $(\eta+1)(r-\eta)=t(t-1)$, so that $\eta=\eta_0$.
	
	\begin{proof}[Proof of Proposition~\ref{prop:endpoint}]
		At the endpoint $\xi=r-t$, dividing~\eqref{eq:quad} by $t$ and substituting
		$K=t(r+1-t)$, $L=t(t-1)$ gives~\eqref{eq:psibd}, whose largest root is
		$\psi_{\mathrm{bd}}(r,t)$. Denote the left side of that quadratic by $h$. The product of
		its two roots is $-\eta_0-(t-1)(r-t)<0$, so its smaller
		root is negative while $\xi_0=\psym(r,t)>0$; as $h$ opens upwards, we therefore have
		$\xi_0\le\psi_{\mathrm{bd}}(r,t)$ if and only if $h(\xi_0)\le0$, that is, if and
		only if
		\[
		\Phi(r,t):=(\xi_0+1)\eta_0-\xi_0^{2}-\xi_0t+(t-1)(r-t)\ \ge\ 0 .
		\]
		Write $S_0=\sqrt{(r+1)^{2}-2rt}$ and $D_0=\sqrt{(r+1)^{2}-4t(t-1)}$, so that
		$\xi_0=\frac{r-1+S_0}{2}$ and $\eta_0=\frac{r-1+D_0}{2}$. A direct computation gives
		\[
		4\Phi=D_0X-Y,\qquad X=r+1+S_0,\qquad Y=S_0(r+2t-1)+M,
		\]
		where $M=r^{2}-4rt+4r+4t^{2}-6t+3=(r-2t)^{2}+4r-6t+3$. Since $t<\frac{r+2}{2}$ forces
		$r\ge2t-1$, we get $4r-6t+3\ge2t-1>0$, so $M>0$ and hence $Y>0$ and $D_0X+Y>0$.
		Therefore $\Phi\ge0$ if and only if $D_0^{2}X^{2}\ge Y^{2}$. Reducing $D_0^{2}X^{2}-Y^{2}$
		modulo $S_0^{2}=(r+1)^{2}-2rt$ leaves $A+BS_0$, where
		\[
		\begin{aligned}
			\tfrac14 A=\;&r^{3}t-7r^{2}t^{2}+10r^{2}t-2r^{2}+12rt^{3}-30rt^{2}+23rt-4r\\
			&-4t^{4}+12t^{3}-18t^{2}+12t-2,\\
			\tfrac14 B=\;&r^{2}t-rt+2r-4t^{3}+6t^{2}-4t+2 ,
		\end{aligned}
		\]
		and a further computation gives the identity
		\begin{equation}\label{eq:resultant}
			B^{2}S_0^{2}-A^{2}=16\,t\,(r-2t+2)^{2}\,W(r,t).
		\end{equation}
		Substituting $r=2t-1+u$ with $u\ge0$ gives
		$\frac14B=tu^{2}+(4t^{2}-3t+2)u+2t$, all of whose coefficients are positive, so $B>0$.
		
		We first show that $BS_0>A$. From $S_0^{2}-(r+1-2t)^{2}=2t(r+2-2t)>0$ we get
		$S_0>r+1-2t\ge0$, so it is enough that $B(r+1-2t)\ge A$, that is, that $E\ge0$, where
		\[
		\begin{aligned}
			4E:=B(r+1-2t)-A=\;&5r^{2}t^{2}-10r^{2}t+4r^{2}-16rt^{3}+38rt^{2}-32rt+8r\\
			&+12t^{4}-28t^{3}+32t^{2}-20t+4 .
		\end{aligned}
		\]
		Substituting $r=2t-1+u$ gives
		\[
		E=(5t^{2}-10t+4)u^{2}+4t(t^{2}-3t+1)u+t(t-2)(4t-1),
		\]
		whose three coefficients are positive for $t\ge3$; and for $t=2$ it reduces to $4u(u-2)$,
		which is nonnegative except when $u=1$, that is, except for $(r,t)=(4,2)$. In that case
		$\xi_0=3$ while
		$\psi_{\mathrm{bd}}(4,2)=\frac{-1+\sqrt{17}+\sqrt{74+6\sqrt{17}}}{4}=3.2649\ldots>3$, so
		$\Phi(4,2)>0$, and $W(4,2)=304>0$; the two agree. Thus $BS_0>A$ in every case, directly at
		$(r,t)=(4,2)$, where $BS_0=216>88=A$.
		
		Now $r\ge2t-1$ gives $r-2t+2\ge1$, so $16t(r-2t+2)^{2}>0$. Multiplying $A+BS_0$ by the
		positive number $BS_0-A$ therefore preserves its sign, and by~\eqref{eq:resultant}
		\[
		   (A+BS_0)(BS_0-A)=B^{2}S_0^{2}-A^{2}=16t(r-2t+2)^{2}W(r,t) .
		\]
		Hence $A+BS_0$ and $W(r,t)$ have the same sign, so $\Phi\ge0$ if and only if $W(r,t)\ge0$,
		with $\Phi>0$ if and only if $W(r,t)>0$ and $\Phi=0$ exactly when $W(r,t)=0$. Since
		$\xi_0<\psi_{\mathrm{bd}}(r,t)$,
		$\xi_0=\psi_{\mathrm{bd}}(r,t)$ and $\xi_0>\psi_{\mathrm{bd}}(r,t)$ correspond to $\Phi>0$,
		$\Phi=0$ and $\Phi<0$, this gives the trichotomy in the statement, and combining it with
		Lemma~\ref{lem:concave} gives the last assertion.
	\end{proof}
	
	\begin{prop}\label{prop:nonzero}
		$W(r,t)\ne0$ for all integers $r$ and $t$ with $2\le t<r$.
	\end{prop}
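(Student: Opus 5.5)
The plan is to prove Proposition~\ref{prop:nonzero} arithmetically rather than by locating real roots. The real root of $W(r,\cdot)$ in the relevant range sits near $r\approx t/0.4184$ (Remark~\ref{rem:asym}), and this irrational slope gives no usable separation from integers. So I will look at $W$ modulo powers of $2$.

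\emph{Reduction mod $2$ and mod $4$.} Every coefficient of $r^3$, of $r$ and of $r^0$ in~\eqref{eq:W} is divisible by $4$. Indeed, $12t^2-20t+8=4(3t-2)(t-1)$, and the other two brackets visibly have all coefficients divisible by $4$. The only contribution not divisible by $4$ comes from $-33t^3r^2$, since $88,80,24\equiv0\pmod 4$. Hence
\[
W(r,t)\equiv -33\,r^2t^3\equiv 3r^2t^3 \pmod 4 .
\]
In particular $W$ is odd whenever $r$ and $t$ are both odd. So it remains to treat $r$ even, or $t$ even.

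\emph{The $2$-adic cases.} Write $r=2^{\alpha}r'$ and $t=2^{\beta}t'$ with $r',t'$ odd and $\alpha+\beta\ge1$. For each monomial $c\,r^it^j$ of $W$ I will compute its $2$-adic valuation $v_2(c)+i\alpha+j\beta$. The aim is to show that in each region of the $(\alpha,\beta)$-plane exactly one monomial attains the minimal valuation. Then $v_2(W)$ equals that minimum, and in particular $W\neq0$.

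A sample case is $\beta=0$, $\alpha\ge1$, i.e.\ $t$ odd and $r$ even. Then $-33r^2t^3$ has valuation $2\alpha$. The constant part $-4t^5+\dots+8$ has valuation $v_2(-4t^5+16t^4-32t^3+40t^2-28t+8)$, which for odd $t$ equals $v_2(4(-t^5+4t^4-8t^3+10t^2-7t+2))$. Here the inner sum is $-1+0+0+0-1+0\equiv 0\pmod 2$, so one has to go one level deeper. Cases of this kind are where cancellations occur. There I will reduce $W$ modulo $2^{m}$ for the smallest $m$ that isolates a term, substituting $t=2s+1$ or $r=2\rho$ and expanding.

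\emph{The main obstacle.} I expect a few residue classes in which the lowest-valuation terms genuinely cancel to every order, for instance along a line $\alpha=\beta+\text{const}$ where $r^2t^3$ and $t^5$ compete. In such a class I will switch tools and view $W$ as a polynomial in $t$ with leading term $-4t^5$. By the rational root theorem applied after dividing out the common power of $2$, any integer zero $t$ must divide a specific small expression in $r$. Combined with the constraint $2\le t<r$ and a sign check of $W$ at $t=2$ ($W(r,2)=16(r^3-3r^2+r-1)>0$) and at $t=r-1$, this should leave only finitely many pairs. I will verify those pairs directly.

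If the $2$-adic bookkeeping becomes unwieldy, the fallback is to repeat the same valuation argument with the prime $3$, using $W\bmod 3$. This would cover the classes where the prime $2$ fails.
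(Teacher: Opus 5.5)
Your reduction modulo $4$ is correct: $W\equiv 3r^{2}t^{3}\pmod 4$, which disposes of the case where $r$ and $t$ are both odd. The gap is in everything after that. The $2$-adic plan cannot terminate, and no congruence modulo a fixed number can. The reason is that $W$ has integer zeros just outside the range.
\begin{itemize}
\item[] From~\eqref{eq:W}, $W(r,1)=-r^{2}$, so $W(0,1)=0$.
\item[] Likewise $W(r,0)=8(r+1)^{3}$, so $W(-1,0)=0$.
\end{itemize}
Hence for every $m$, all pairs with $r\equiv0$ and $t\equiv1\pmod{2^{m}}$ satisfy $2^{m}\mid W(r,t)$. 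An example is $r=2^{m+1}$, $t=2^{m}+1$, which lies in the range $2\le t<r$.

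So in your sample class ($r$ even, $t$ odd) there is no uniform bound on the $2$-adic valuation of $W$, and ties among lowest-valuation monomials recur at every depth. The ``few residue classes'' you anticipate are therefore not finitely many exceptional pairs but infinite families. The same zero $(0,1)$ defeats the fallback to the prime $3$, or to any fixed modulus.

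Your other fallback does not close this either. As a polynomial in $t$, $W$ has leading coefficient $-4$ and constant term $8(r+1)^{3}$. The rational root theorem therefore only gives $t\mid 8(r+1)^{3}$, which infinitely many in-range pairs satisfy. The sign checks also cannot help. $W(r,2)>0$, while $W(r,r-1)$ has leading term $-13r^{5}$; for instance $W(10,9)=-658052$. So these checks only confirm that $W(r,\cdot)$ has a real root in $(2,r-1)$, and that root is exactly what must be shown never to be an integer.

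The missing idea is to reduce modulo a number that depends on $t$. The coefficients of $r^{3}$, $r^{2}$ and $r$ in $W$ are all divisible by $3t-2$; for example, $12t^{2}-20t+8=4(3t-2)(t-1)$. The constant term is $-4(t-1)^{3}(t^{2}-t+2)$. So $W=0$ forces $3t-2\mid 4(t-1)^{3}(t^{2}-t+2)$.

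Modulo $3t-2$ one has $3(t-1)\equiv-1$ and $9(t^{2}-t+2)\equiv16$. Multiplying by $243$ then gives $3t-2\mid 64$. Since $3t-2\equiv1\pmod 3$ and $3t-2\ge4$, this leaves only $t\in\{2,6,22\}$. Each of these three cubics in $r$ is then excluded separately, by rational roots, by a sign estimate combined with $5\mid r$, and by parity. This is how the paper proves the proposition.
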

	
	\begin{proof}
		Put $x=t-1$, so that $x\ge1$, and set $s=3x+1$. In terms of $x$,
		\[
		   W=4x(3x+1)r^{3}-(3x+1)(11x^{2}+1)r^{2}+4x(3x+1)(x^{2}-x+1)r-4x^{3}(x^{2}+x+2).
		\]
		Every term except the last is divisible by $s$, so $W\equiv-4x^{3}(x^{2}+x+2)\pmod s$. From
		$3x\equiv-1\pmod s$ we get $27x^{3}\equiv-1$ and
		$9(x^{2}+x+2)=(3x)^{2}+3(3x)+18\equiv1-3+18\equiv16\pmod s$, whence
		\[
		   243\,W\equiv-4\cdot27x^{3}\cdot9(x^{2}+x+2)\equiv-4\cdot(-1)\cdot16\equiv64\pmod s .
		\]
		Suppose $W=0$. Then $s\mid64$, and since $s\equiv1\pmod3$ and $s\ge4$, the only
		possibilities are $s\in\{4,16,64\}$, that is, $x\in\{1,5,21\}$. We rule out each.
	
		If $x=1$, then $W=16(r^{3}-3r^{2}+r-1)$. An integer root of $r^{3}-3r^{2}+r-1$ divides the
		constant term, so $r\in\{1,-1\}$, contradicting $r>t=2$.
	
		If $x=5$, then $W=64(5r^{3}-69r^{2}+105r-250)$. Reducing modulo $5$ gives
		$W\equiv 64\,r^{2}\pmod 5$, so $5\mid r$. For $r\ge15$,
		\[
		   W>64\bigl(5r^{3}-70r^{2}+105r-1470\bigr)=320\,(r^{2}+21)(r-14)>0 ,
		\]
		the first inequality because the difference of the two cubics is $64(r^{2}+1220)>0$. For
		$2<r\le10$,
		\[
		   W<64\bigl(5r^{3}-65r^{2}+110r\bigr)=320\,r(r-2)(r-11)<0 ,
		\]
		the first inequality because the difference is $-64(4r^{2}+5r+250)<0$. The remaining
		values $11\le r\le14$ contain no multiple of $5$ and are excluded by $5\mid r$.
	
		If $x=21$, then $W=64\bigl(84r^{3}-4852r^{2}+35364r-268569\bigr)$. The first three terms
		inside the parentheses are even and the last is odd, so the sum is odd and cannot vanish.
	\end{proof}
	
	Consequently the hypotheses $W(r,t)\ge0$ and $W(r,t)>0$ agree throughout the range, the
	middle case of the trichotomy in Proposition~\ref{prop:endpoint} never occurs, and
	parts $(ii)$ and $(iii)$ of Theorem~\ref{thm:main} between them cover all of
	$2\le t<\frac{r+2}{2}$. This proves the last assertion of Theorem~\ref{thm:main}.
	
	\begin{rem}\label{rem:asym}
		Substituting $t=\tau r$ and letting $r\to\infty$ gives
		\[
		\lim_{r\to\infty}\frac{W(r,\tau r)}{r^{5}}=-\tau^{2}\bigl(4\tau^{3}-12\tau^{2}+33\tau-12\bigr),
		\]
		so asymptotically the criterion $W\ge0$ reads $\tau\le\tau^{*}$, where $\tau^{*}$ is the
		unique real root of $4\tau^{3}-12\tau^{2}+33\tau-12$ in $(0,1)$. Substituting
		$\tau=y+1$ depresses this cubic to $y^{3}+\frac{21}{4}y+\frac{13}{4}=0$, and since
		$\bigl(\frac{13}{8}\bigr)^{2}+\bigl(\frac{7}{4}\bigr)^{3}=8$, Cardano's formula gives
		\[
		\tau^{*}=1+\frac{\sqrt[3]{16\sqrt2-13}-\sqrt[3]{16\sqrt2+13}}{2}=0.4184209\ldots
		\]
		Thus the diagonal governs the bound for $t$ up to roughly $0.4184\,r$, and the other
		endpoint governs it from there up to $\frac{r+2}{2}$.
		
		The next term can be read off in the same way. Writing
		$W(r,\tau r)=P(\tau)r^{5}+Q(\tau)r^{4}+O(r^{3})$ with
		$P(\tau)=-\tau^{2}(4\tau^{3}-12\tau^{2}+33\tau-12)$ and
		$Q(\tau)=-20\tau+88\tau^{2}-56\tau^{3}+16\tau^{4}$, and putting $t=\tau^{*}r+c$, we get
		$W=\bigl[P'(\tau^{*})c+Q(\tau^{*})\bigr]r^{4}+O(r^{3})$. Hence the largest real $t$ with
		$W(r,t)\ge0$ equals $\tau^{*}r+c^{*}+O(r^{-1})$, where
		\[
		c^{*}=-\frac{Q(\tau^{*})}{P'(\tau^{*})}=0.780998\ldots
		\]
		is the real root of $81z^{3}-189z^{2}+216z-92$.
	\end{rem}
	
	The graphs attaining $\psi_{\mathrm{bd}}(r,t)$ are built as follows. Let
	$2\le t<\frac{r+2}{2}$ and suppose that
	\begin{equation}\label{eq:square}
		(r+1)^{2}-4t(t-1)=D^{2}
	\end{equation}
	for an integer $D$ with $1\le D\le r-1$. Put
	\[
	a=r+1-t,\qquad b=\frac{r+1+D}{2},
	\]
	and let $H(r,t)$ have $A\cong K_a$ joined to every vertex of an independent set $S$ of
	size $t$, and $B\cong K_b$ meeting $S$ in a bipartite graph biregular of degrees
	$\bigl(r+1-b,\,t-1\bigr)$; see Figure~\ref{fig:bd}.
	
	\begin{prop}\label{prop:bd}
		If~\eqref{eq:square} holds with $1\le D\le r-1$, then the graph $H(r,t)$ is a connected
		$r$-regular graph on $a+b+t$ vertices with $\kappa\bigl(H(r,t)\bigr)=t$ and
		$\lambda_2\bigl(H(r,t)\bigr)=\psi_{\mathrm{bd}}(r,t)$. Consequently, if in addition
		$W(r,t)\le0$, then $\psi_{\mathrm{bd}}(r,t)$ is a sharp threshold for $(r,t)$.
	\end{prop}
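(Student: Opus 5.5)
The plan is to verify in turn that $H(r,t)$ exists, that it is $r$-regular, connected and has $\kappa=t$, and then to compute $\lambda_2$ using Lemma~\ref{lem:spec}. Sharpness will then follow from Propositions~\ref{prop:arc} and~\ref{prop:endpoint}.

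\emph{Existence and regularity.} Since $(r+1)^2-D^2=4t(t-1)$ is even, $D\equiv r+1\pmod 2$, so $b=\frac{r+1+D}{2}$ is an integer. Moreover
\[
b(r+1-b)=\tfrac{(r+1)^2-D^2}{4}=t(t-1).
\]
This is exactly the edge-count condition for a bipartite graph between $B$ and $S$ with $B$-degree $r+1-b$ and $S$-degree $t-1$. From $D\le r-1$ we get $r+1-b\ge1$. From $b\ge r+1-b$ we get $(r+1-b)^2\le t(t-1)<t^2$, so $r+1-b\le t$; symmetrically $t-1\le b$. A biregular bipartite graph with these parameters then exists; for example, join the $i$-th vertex of $B$ to the $r+1-b$ cyclically consecutive vertices of $S$ starting at position $i(r+1-b)\bmod t$.

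The degrees are as required:
\begin{itemize}
\item a vertex of $A$ has degree $(a-1)+t=r$;
\item a vertex of $S$ has degree $a+(t-1)=r$;
\item a vertex of $B$ has degree $(b-1)+(r+1-b)=r$.
\end{itemize}

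\emph{Connectivity.} $S$ is a cut, so $\kappa\le t$. For the reverse inequality, remove a set $T$ with $|T|\le t-1$, and let $j=|T\cap S|$. Since $t\le\frac{r+1}{2}$, we have $a=r+1-t\ge t>|T|$. Hence some vertex of $A$ and some vertex of $S$ survive, and all surviving vertices of $A\cup S$ lie in one component, because $A$ is a clique and $A$ is complete to $S$. The surviving part of the clique $B$ is nonempty because $b\ge t>|T|$. It remains to show that some surviving vertex of $B$ has a surviving neighbor in $S$.
\begin{itemize}
\item If $j=0$, every vertex of $B$ has an $S$-neighbor, and all of $S$ survives.
\item If $j\ge1$, a surviving vertex $s\in S$ has $t-1$ neighbors in $B$, while at most $t-1-j<t-1$ vertices of $B$ were removed.
\end{itemize}
So the graph stays connected, and $\kappa(H(r,t))=t$.

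\emph{Second eigenvalue.} For the partition $\{A,S,B\}$ we have $\xi=a-1=r-t$, $K=at=t(r+1-t)$ and $L=t(t-1)$. Also $\eta=b-1$ satisfies $(\eta+1)(r-\eta)=b(r+1-b)=t(t-1)$, and $b-1=\frac{r-1+D}{2}=\eta_0$. By the computation opening the proof of Proposition~\ref{prop:endpoint}, $\lambda_1(\Qt)$ is therefore the largest root of~\eqref{eq:psibd}, namely $\psi_{\mathrm{bd}}(r,t)$.

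To apply Lemma~\ref{lem:spec}, note that $N_A$ is all-ones, so
\[
\nu=\lambda_2(N_B^\top N_B)\le (r+1-b)(t-1)\le t(t-1).
\]
By the Claim in the proof of Proposition~\ref{prop:arc}, $\lambda_1(\Qt)>\xi=r-t$. Since $r-t\ge t-1$, this gives
\[
\lambda_1(\Qt)\bigl(\lambda_1(\Qt)+1\bigr)>(r-t)(r-t+1)\ge (t-1)t\ge\nu .
\]
Hence $\theta\le\lambda_1(\Qt)$, and Lemma~\ref{lem:spec} yields $\lambda_2(H(r,t))=\psi_{\mathrm{bd}}(r,t)$.

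\emph{Sharpness.} If $W(r,t)\le0$, Propositions~\ref{prop:arc} and~\ref{prop:endpoint} show that $\psi_{\mathrm{bd}}(r,t)$ is a threshold. The graph $H(r,t)$ has $\kappa\le t$ and attains $\lambda_2=\psi_{\mathrm{bd}}(r,t)$, so the threshold is sharp.

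The main obstacle is the spectral step: checking that the eigenvalues coming from vectors orthogonal to the partition stay below $\psi_{\mathrm{bd}}$. The crude bound $\nu\le t(t-1)$ combined with $\psi_{\mathrm{bd}}>r-t$ handles it. The remaining care needed is in the existence of the biregular graph and in the degree-range inequalities.
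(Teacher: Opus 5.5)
Your proof is correct in outline and follows the paper's skeleton: integrality of $b$, then $b(r+1-b)=t(t-1)$, the degree count, $\kappa=t$, and finally Lemma~\ref{lem:spec} together with Propositions~\ref{prop:arc} and~\ref{prop:endpoint} for sharpness. The local differences are harmless:
\begin{itemize}
\item You give an explicit cyclic biregular graph, where the paper only asserts that one exists.
\item You prove $\kappa\ge t$ by splitting on $j=|T\cap S|$, where the paper counts that at least $t(t-1)-(t-1)^2=t-1$ edges between $B$ and $S$ survive.
\item You get $\theta<\lambda_1(\Qt)$ from $\lambda_1(\Qt)>r-t$ and $(r-t)(r-t+1)\ge t(t-1)\ge\nu$. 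The paper instead rewrites $\lambda(\lambda+1)=(\eta-t+1)\lambda+\eta+(t-1)(r-t)$ and uses $b\ge t+1$. Your estimate is slightly simpler.
\end{itemize}

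There is, however, one hypothesis you never verify. The Claim in the proof of Proposition~\ref{prop:arc} gives $\lambda_1(\Qt)>\xi$ only when $\xi<\eta$, that is, when $b>a=r+1-t$ (equivalently $\eta_0>r-t$). Your existence paragraph shows only $b\ge r+1-b$, i.e.\ $b\ge\frac{r+1}{2}$. This is weaker, because $a=r+1-t\ge\frac{r+1}{2}$.

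Your spectral step genuinely depends on this. If $\eta\le\xi$, then $g(\xi)=(\xi-\eta)K\ge0$ and the largest root of~\eqref{eq:key} would be at most $\xi=r-t$, so the chain $\lambda(\lambda+1)>(r-t)(r-t+1)$ would break.

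The missing line is the paper's. From $D^{2}-(r+1-2t)^{2}=4t(r+2-2t)>0$ and $r+1-2t\ge0$ you get $D>r+1-2t$. Hence $b>r+1-t=a$ and $\eta=b-1>\xi$. Alternatively, you could cite $\psi_{\mathrm{bd}}(r,t)>\zeta(r,t)>r-t$ from the introduction.

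A smaller point: in the connectivity paragraph you use $b\ge t$ after deriving only $b\ge t-1$. This follows at once from $b>\frac{r+1}{2}\ge t$, since $D\ge1$.

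With these two additions your proof is complete.
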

	
	\begin{proof}
		Since $D^{2}\equiv(r+1)^{2}\pmod4$ by~\eqref{eq:square}, we have $D\equiv r+1\pmod2$ and
		$b$ is an integer. As $b$ and $r+1-b$ are the two roots of $z^{2}-(r+1)z+t(t-1)$,
		\begin{equation}\label{eq:bkb}
			b\,(r+1-b)=t(t-1).
		\end{equation}
		Write $k=r+1-b$. From
		$D^{2}-(r+1-2t)^{2}=4t(r+2-2t)>0$ and $r+1-2t\ge0$ we get $D>r+1-2t$, hence
		\begin{equation}\label{eq:bgta}
			b>r+1-t=a\ge t ,
		\end{equation}
		so $k\le t-1$; and $D\le r-1$ gives $k\ge1$. Since also $t-1\le b$, a bipartite graph
		between $B$ and $S$ that is biregular of degrees $(k,t-1)$ exists by~\eqref{eq:bkb}.
		
		A vertex of $A$ has $(a-1)+t=r$ neighbors, a vertex of $B$ has $(b-1)+k=r$, and a vertex
		of $S$ has $a+(t-1)=r$. Thus $H(r,t)$ is $r$-regular, and it is a clique pair over $S$
		with
		\[
		\xi=a-1=r-t,\qquad \eta=b-1,\qquad K=at=t(r+1-t),\qquad L=bk=t(t-1).
		\]
		By the computation preceding the proof of Proposition~\ref{prop:endpoint}, the largest root of the
		corresponding polynomial~\eqref{eq:key} is $\lambda:=\psi_{\mathrm{bd}}(r,t)$, so
		$\lambda_1(\Qt)=\lambda$.
		
		We check that $\kappa(H(r,t))=t$. Deleting $S$ separates $A$ from $B$, so $\kappa\le t$.
		Let $X\subseteq V(H(r,t))$ with $|X|\le t-1$. Then $S\setminus X\ne\emptyset$, and
		$A\setminus X\ne\emptyset$ because $|A|=r+1-t>t-1$, and $B\setminus X\ne\emptyset$
		by~\eqref{eq:bgta}. Every vertex of $S\setminus X$ is adjacent to every vertex of
		$A\setminus X$, so $(A\cup S)\setminus X$ is connected. Each vertex of $X$ meets at most
		$\max\{k,t-1\}=t-1$ edges of the bipartite graph between $B$ and $S$, so at least
		$t(t-1)-(t-1)^{2}=t-1>0$ of those edges survive, and $B\setminus X$ is joined to
		$S\setminus X$. Hence $H(r,t)-X$ is connected and $\kappa=t$.
		
		For the eigenvalue we apply Lemma~\ref{lem:spec}. Every vertex of $A$ is adjacent to every
		vertex of $S$, so only $B$ contributes to $\nu$, and $\nu\le k(t-1)$. From
		$\lambda^{2}=(\eta-t)\lambda+\eta+(t-1)(r-t)$ we get
		\[
		\lambda(\lambda+1)=(\eta-t+1)\lambda+\eta+(t-1)(r-t) .
		\]
		Here $\lambda>0$, since the constant term $-\eta-(t-1)(r-t)$ of that quadratic is
		negative, and $\eta-t+1\ge a-t+1\ge1$ by~\eqref{eq:bgta}. Moreover $b\ge t+1$
		by~\eqref{eq:bgta}, so
		\[
		(t-1)(r-t)-k(t-1)=(t-1)(b-t-1)\ge0 .
		\]
		Therefore $\lambda(\lambda+1)>k(t-1)\ge\nu$, so $\theta<\lambda$ and
		Lemma~\ref{lem:spec} gives $\lambda_2(H(r,t))=\lambda$.
		
		Finally, if $W(r,t)\le0$, then by Proposition~\ref{prop:endpoint} the minimum in Proposition~\ref{prop:arc}
		is the endpoint value $\psi_{\mathrm{bd}}(r,t)$, so every connected $r$-regular graph $G$
		with $\kappa(G)\le t$ satisfies $\lambda_2(G)\ge\psi_{\mathrm{bd}}(r,t)$, and $H(r,t)$
		attains it.
	\end{proof}
	
	\begin{rem}\label{rem:bd}
		The hypothesis $W(r,t)\le0$ in Proposition~\ref{prop:bd} cannot be dropped. For
		$(r,t)=(6,3)$ we have $(r+1)^{2}-4t(t-1)=25$, so $H(6,3)$ is defined with $D=5$, $a=4$
		and $b=6$; it is a $6$-regular graph on $13$ vertices with
		$\lambda_2=1+2\sqrt3=4.4641\ldots$, but $W(6,3)=1508>0$, so the minimum in
		Proposition~\ref{prop:arc} is the diagonal value $\psym(6,3)=\frac{5+\sqrt{13}}{2}=4.3027\ldots$
		and $H(6,3)$ does not attain it.
		
		On the other hand, the hypotheses of Proposition~\ref{prop:bd} hold for infinitely many
		pairs. Taking
		$r=m(m+1)$ and $t=\frac r2$ with $m\ge3$ and writing $u=m^{2}+m$, we get
		$(r+1)^{2}-4t(t-1)=(u+1)^{2}-u(u-2)=4u+1=(2m+1)^{2}$, so~\eqref{eq:square} holds with
		$D=2m+1\le r-1$. For these pairs
		\[
		W\Bigl(r,\frac r2\Bigr)=-\tfrac12\bigl(r^{5}-12r^{4}+20r^{3}+16r^{2}-20r-16\bigr)
		=-\tfrac12\Bigl(r^{3}\bigl(r^{2}-12r+20\bigr)+16r^{2}-20r-16\Bigr),
		\]
		and both $r^{2}-12r+20$ and $16r^{2}-20r-16$ are positive for $r\ge11$, so
		$W(r,\frac r2)<0$ because $r=m(m+1)\ge12$. The case $m=3$ is $(r,t)=(12,6)$, with $D=7$:
		there $H(12,6)$ has $A\cong K_7$ joined to every vertex of an independent set $S$ of size
		$6$, and $B\cong K_{10}$ meeting $S$ in a bipartite graph biregular of degrees $(3,5)$, so
		$H(12,6)$ is $12$-regular on $23$ vertices with $\kappa=6$ and
		$\lambda_2=\frac{3+\sqrt{165}}{2}$.
		Unlike the pairs of Theorem~\ref{thm:main}$(ii)$, the pairs satisfying~\eqref{eq:square}
		need not have $r$ even; for instance
		$(r,t)=(7,4)$, $(17,8)$ and $(31,16)$ also satisfy~\eqref{eq:square} with $W(r,t)<0$.
	\end{rem}
	
	\begin{figure}[H]
		\centering
		\begin{tikzpicture}[scale=0.92]
			\draw[clq] (-4.9,0) ellipse (1.5 and 2.0);
			\node[align=center] at (-4.9,0) {$K_{a}$};
			\node[tag] at (-4.9,2.35) {$\xi=r-t$};
			\foreach \i/\y in {1/1.35,2/0.45,3/-0.45,4/-1.35} \node[vtx] (l\i) at (-3.55,\y) {};
			\foreach \i/\y in {1/1.6,2/0.8,3/0.0,4/-0.8,5/-1.6} \node[cut] (s\i) at (0,\y) {};
			\draw[blob] (-0.8,2.05) rectangle (0.8,-2.05);
			\node[tag] at (0,2.4) {$S$ independent};
			\draw[clq] (4.9,0) ellipse (1.7 and 2.2);
			\node[align=center] at (4.9,0) {$K_b$};
			\node[tag] at (4.9,2.55) {$\eta>\xi$};
			\foreach \i/\y in {1/1.35,2/0.45,3/-0.45,4/-1.35} \node[vtx] (r\i) at (3.4,\y) {};
			\foreach \i in {1,2,3,4} \foreach \j in {1,2,3,4,5} \draw[se,opacity=0.45] (l\i)--(s\j);
			\foreach \i/\j in {1/1,1/2,2/2,2/3,3/3,3/4,4/4,4/5} \draw[se] (r\i)--(s\j);
			\node[lbl,align=center] at (-1.95,-2.5) {joined to all of $S$};
			\node[lbl,align=center] at ( 2.0,-2.5) {biregular to $S$};
		\end{tikzpicture}
		\caption{The graph $H(r,t)$ of Proposition~\ref{prop:bd}, shown schematically; for
			$(r,t)=(12,6)$ one has $a=7$, $b=10$ and order $23$. The two sides are complete of
			different orders, so $\xi\ne\eta$ and $\lambda_2$ lies strictly between
			them.}\label{fig:bd}
	\end{figure}
	
	\subsection{The equality cases}
	
	\begin{proof}[Proof of Theorem~\ref{thm:main}$(ii)$]
		Since $W(r,t)>0$, Propositions~\ref{prop:arc} and~\ref{prop:endpoint} show that every
		connected $r$-regular graph $G$ with $\kappa(G)\le t$ satisfies~\eqref{eq:main}, so
		$\psym(r,t)$ is a threshold for $(r,t)$. It remains to determine when it is sharp. We
		first record a consequence of the criterion of Proposition~\ref{prop:endpoint}, which will
		be used in the second half of the proof.
		
		\medskip
		\noindent\emph{Claim.} Let $r$ be even with $r\ge4$ and let $k$ be an integer with
		$1\le k\le\frac r2$ and $t=\frac{2k(r+1-k)}{r}$. Then $3k\le r$.
		
		\smallskip
		\noindent\emph{Proof of the Claim.}
		For $r=4$ the hypotheses force $k=1$, so assume $r\ge6$ and suppose $3k>r$. The function
		$k\mapsto\frac{2k(r+1-k)}{r}$ is increasing on $\bigl[0,\frac{r+1}{2}\bigr]$ and
		$k\le\frac r2$, so $t>\frac{4r+6}{9}$; write $9t=4r+6+v$ with $v>0$. Substituting and
		multiplying by $3^{10}$ gives
		\[
		\begin{aligned}
			-3^{10}W(r,t)=\;&v^{3}\bigl(4v^{2}-4(7r+6)v+1585r^{2}+1560r+576\bigr)\\
			&+2\bigl(7760r^{3}+2493r^{2}-5292r-2484\bigr)v^{2}\\
			&+4\bigl(8948r^{4}-3723r^{3}-17145r^{2}-459r+3807\bigr)v\\
			&+16\bigl(8r^{2}-39r-36\bigr)\bigl(59r^{3}+27r^{2}-135r+27\bigr).
		\end{aligned}
		\]
		The quadratic in $v$ on the first line has discriminant $-192(128r^{2}+123r+45)<0$, so it
		is positive. On the remaining three lines, the coefficients of $v^{2}$ and of $v$ are
		positive for $r\ge2$, and in the constant term the factor $59r^{3}+27r^{2}-135r+27$ is
		positive for $r\ge2$ while $8r^{2}-39r-36$ is positive exactly for $r>\frac{39+9\sqrt{33}}{16}=5.668\ldots$;
		since $r$ is even and $r\ge6$, both factors are positive. Hence $W(r,t)<0$, contrary to
		the hypothesis of the theorem. This proves the Claim.
		
		\medskip
		Suppose now that $\psym(r,t)$ is sharp, so that some connected $r$-regular graph $G$ with
		$\kappa(G)\le t$ satisfies $\lambda_2(G)=\psym(r,t)$, and run the reduction in the proof of
		Proposition~\ref{prop:arc} for a vertex cut $S$ of $G$ with $|S|=t$. Since
		$W(r,t)>0$, Propositions~\ref{prop:arc} and~\ref{prop:endpoint} give
		$\lambda_2(G)\ge\lambda_1(\Qt)\ge\psym(r,t)=\lambda_2(G)$, so every inequality there is an
		equality.
		
		We first check that $\xi=\eta$. Suppose not. If $\eta_1=\xi$, then the last step of the
		reduction replaced $\eta$ by $\eta_1<\eta$, which strictly decreases the largest root by
		the Claim in the proof of Proposition~\ref{prop:arc}; hence
		$\lambda_1(\Qt)>\xi\ge\psym(r,t)$, a contradiction. Otherwise the reduction ends with
		$\lambda_1(\Qt)=\Lambda(\xi,\eta_1)$ for a point $(\xi,\eta_1)\in\mathcal F(r,t)$
		with $\xi<\eta_1$, and this value equals the minimum of $\Lambda$ over
		$\mathcal F(r,t)$. By Lemma~\ref{lem:concave} that minimum is attained only at an
		endpoint of the $\sigma$-interval, and since $W(r,t)>0$ the diagonal endpoint is the
		strictly smaller of the two; hence $\xi=\eta_1$, a contradiction.
		
		So $\xi=\eta$, and $\lambda_1(\Qt)=\xi$ by Corollary~\ref{cor:diag}, whence
		$\xi=\psym(r,t)$. Equality throughout $2(\xi+1)(r-\xi)\le K+L\le rt$ forces
		$K=(\xi+1)(r-\xi)$ and $L=(\eta+1)(r-\eta)$, so $G[A]$ and $G[B]$ are complete,
		and $K+L=rt$, so $S$ is independent. Thus $a=b=\xi+1$ and $K=L=\frac{rt}{2}$, and
		counting the edges between $A$ and $S$ in two ways gives
		\begin{equation}\label{eq:pellk}
			a(r+1-a)=\frac{rt}{2}.
		\end{equation}
		
		Next we show that $r$ is even. Since $G$ is $r$-regular and connected, the all-ones vector
		spans the eigenspace of $\lambda_1(G)=r$, so
		$\lambda_2(G)=\max\bigl\{\frac{v^{\top}A(G)v}{v^{\top}v}:v\ne0,\ v\perp\mathbf 1\bigr\}$,
		the maximum being attained exactly at the eigenvectors for $\lambda_2(G)$ that lie in
		$\mathbf 1^{\perp}$. Let $v$ be equal to $1$ on $A$, to $-1$ on $B$ and to $0$ on $S$.
		Then $v\perp\mathbf 1$, and as $G[A]$ and $G[B]$ are complete of order $a$ while every
		other edge of $G$ meets $S$,
		\[
		\frac{v^{\top}A(G)v}{v^{\top}v}=\frac{2\cdot2\binom a2}{2a}=a-1=\psym(r,t)=\lambda_2(G).
		\]
		Hence $v$ is an eigenvector of $A(G)$ for $a-1$. Reading that equation at a vertex
		$s\in S$ gives $\deg_A(s)-\deg_B(s)=(a-1)\cdot0=0$, while $\deg_A(s)+\deg_B(s)=r$
		because $S$ is independent. Therefore $\deg_A(s)=\deg_B(s)=\frac r2$ for every $s\in S$,
		so $r$ is even, and the bipartite graph between each side and $S$ is biregular of degrees
		$\bigl(r+1-a,\frac r2\bigr)$.
		
		Setting $k=r+1-a$, equation~\eqref{eq:pellk} reads $t=\frac{2k(r+1-k)}{r}$, and since
		$2k(r+1-k)\equiv2k(1-k)\pmod r$ this is an integer exactly when $r\mid2k(k-1)$. Finally
		every vertex of $A$ has $k$ neighbors in $S$, so $k\le t<\frac{r+2}{2}$; as $r$ is even
		and $k$ is an integer, $k\le\frac r2$.
		
		Conversely, given such a $k$, the data above determine a graph $G(r,t)$: take
		$A\cong B\cong K_a$ with $a=r+1-k$, an independent set $S$ of size $t$, and between each
		clique and $S$ a bipartite graph biregular of degrees $\bigl(k,\frac r2\bigr)$, which
		exists by~\eqref{eq:pellk} because $k\le t$ and $\frac r2\le a$. Then $G(r,t)$ is a
		connected $r$-regular clique pair over $S$ with $\xi=\eta=a-1$, so
		$\lambda_1(\Qt)=a-1$ by Corollary~\ref{cor:diag}. In Lemma~\ref{lem:spec} we have
		$k_A=k_B=k$ and $d_A=d_B=\frac r2$, so $\nu\le rk$, and $rk\le a(a-1)$ implies
		$\theta\le a-1$. The Claim gives $k\le\frac r3$, whence
		\[
		a(a-1)=(r+1-k)(r-k)\ge\Bigl(\frac{2r}{3}+1\Bigr)\frac{2r}{3}>\frac{r^{2}}{3}\ge rk .
		\]
		Therefore $\lambda_2(G(r,t))=a-1=\psym(r,t)$ by Lemma~\ref{lem:spec}. Deleting $S$
		separates $A$ from $B$, so $\kappa(G(r,t))\le t$, and hence $G(r,t)$ witnesses the
		sharpness of $\psym(r,t)$. See Figure~\ref{fig:diag}.
	\end{proof}
	
	We turn to the other endpoint, where the answer needs no divisibility and no parity.
	
	\begin{proof}[Proof of Theorem~\ref{thm:main}$(iii)$]
		Since $W(r,t)<0$, Proposition~\ref{prop:endpoint} says that the minimum in
		Proposition~\ref{prop:arc} is the
		endpoint value $\psi_{\mathrm{bd}}(r,t)$, and that it is strictly smaller than
		$\psym(r,t)$; in particular $\psi_{\mathrm{bd}}(r,t)$ is a threshold for $(r,t)$.
		If~\eqref{eq:square} holds with $1\le D\le r-1$, then Proposition~\ref{prop:bd} produces a
		graph attaining it.
		
		Conversely, suppose $\lambda_2(G)=\psi_{\mathrm{bd}}(r,t)$, and run the reduction in the proof of
		Proposition~\ref{prop:arc} for a vertex cut $S$ of $G$ with $|S|=t$. Every inequality there is
		an equality. If $\xi=\eta$, or if $\eta_{1}=\xi$, the reduction ends with largest root
		$\xi\ge\psym(r,t)>\psi_{\mathrm{bd}}(r,t)$, which is impossible; so $\xi<\eta_{1}$ and
		$\lambda_1(\Qt)=\Lambda(\xi,\eta_{1})$ equals the minimum of $\Lambda$ over
		$\mathcal F(r,t)$. By Lemma~\ref{lem:concave} that minimum is attained only at an endpoint
		of the $\sigma$-interval, and the diagonal endpoint has the larger value $\psym(r,t)$;
		hence $\xi=r-t$.
		
		By the strict monotonicity in the Claim in the proof of Proposition~\ref{prop:arc}, the three
		steps of the reduction were
		equalities: $K=(\xi+1)(r-\xi)$, so $G[A]=K_{a}$; then $L=rt-K$, so $S$ is independent; and
		$\eta=\eta_{1}$, so $G[B]=K_{b}$.
		
		Now $\xi=r-t$ gives $a=r+1-t$ and $K=at$, so every vertex of $A$ is adjacent to every
		vertex of $S$. Hence $L=rt-K=t(t-1)$, while $G[B]=K_{b}$ gives $L=b(r+1-b)$, so
		\[
		b(r+1-b)=t(t-1)\qquad\text{and}\qquad (r+1-2b)^{2}=(r+1)^{2}-4t(t-1).
		\]
		Thus~\eqref{eq:square} holds with $D=2b-(r+1)$, which is positive because
		$b>\xi+1=r+1-t\ge\frac{r+1}{2}$, and at most $r-1$ because $G$ is connected and therefore
		$r+1-b\ge1$. Finally every vertex of $B$ has $r+1-b$ neighbors in $S$, and every vertex of
		$S$ has $a$ neighbors in $A$ and $r-a=t-1$ in $B$, so the bipartite graph between $B$ and
		$S$ is biregular of degrees $(r+1-b,t-1)$. Hence $G$ is one of the graphs $H(r,t)$ of
		Proposition~\ref{prop:bd}.
	\end{proof}
	
	\begin{figure}[H]
		\centering
		\begin{tikzpicture}[scale=0.92]
			\draw[clq] (-5.2,0) ellipse (1.75 and 2.15);
			\draw[clq] ( 5.2,0) ellipse (1.75 and 2.15);
			\node[align=center] at (-5.2,0) {$K_a$\\[3pt] \footnotesize $a=r+1-k$};
			\node[align=center] at ( 5.2,0) {$K_a$};
			\foreach \i/\y in {1/1.35,2/0.45,3/-0.45,4/-1.35}
			{ \node[vtx] (l\i) at (-3.65,\y) {}; \node[vtx] (r\i) at (3.65,\y) {}; }
			\draw[blob] (-0.8,2.0) rectangle (0.8,-2.3);
			\node[cut] (s1) at (0, 1.45) {};
			\node[cut] (s2) at (0, 0.55) {};
			\node[cut] (s3) at (0,-1.75) {};
			\node[tag] at (0,-0.55) {$\vdots$};
			\node[tag] at (1.15,1.45) {$s_1$};
			\node[tag] at (1.15,-1.75) {$s_t$};
			\foreach \i in {1,2,3,4} \foreach \j in {1,2,3}
			{ \draw[se,opacity=0.6] (l\i)--(s\j); \draw[se,opacity=0.6] (r\i)--(s\j); }
			\node[lbl,align=center] at (0,2.75)
			{$S$ independent, $|S|=t$; each $s\in S$ sends $\frac r2$ to each side};
			\node[lbl,align=center] at (0,-3.05) {$\xi=\eta=a-1$, and $a(r+1-a)=\frac{rt}{2}$};
		\end{tikzpicture}
		\caption{The extremal graph $G(r,t)$ of Theorem~\ref{thm:main}$(ii)$. The two sides are complete of
			equal order, so $\lambda_2=a-1=\psym(r,t)$ by Corollary~\ref{cor:diag}.}\label{fig:diag}
	\end{figure}
	
	\begin{rem}\label{rem:list}
		Taking $k=1$ gives $2k(k-1)=0$, so every even $r$ qualifies, with $t=2$ and $a=r$; this
		is the family of Figure~\ref{fig:t2}. For $2\le k\le6$ the pairs in the range
		$2\le t<\frac{r+2}{2}$ satisfying the arithmetic conditions of Theorem~\ref{thm:main}$(ii)$
		are $(12,5)$, $(12,6)$, $(24,7)$, $(20,8)$, $(40,9)$, $(20,9)$, $(30,10)$ and $(60,11)$, and
		$W(r,t)>0$ for all of them except $(12,6)$, where $W(12,6)=-18304$. At $(12,6)$ the graph
		$G(12,6)$ of the proof of Theorem~\ref{thm:main}$(ii)$ still exists: it consists of two
		copies of $K_9$ and an independent cut of size $6$, meeting each clique in a bipartite
		graph biregular of degrees $(4,6)$. It is $12$-regular on $24$ vertices with $\kappa=6$,
		and Corollary~\ref{cor:diag} and Lemma~\ref{lem:spec}, with $\nu\le rk=48\le72=a(a-1)$,
		give $\lambda_2(G(12,6))=8=\psym(12,6)$. Yet $\psym(12,6)$ is not a threshold, since
		$H(12,6)$ of Remark~\ref{rem:bd} has $\kappa=6$ and $\lambda_2=\frac{3+\sqrt{165}}{2}<8$.
		Thus $W(r,t)>0$ is a genuine hypothesis of Theorem~\ref{thm:main}$(ii)$. The Claim in its
		proof shows that this hypothesis forces $k\le\frac r3$, whereas the divisibility conditions
		alone allow $k$ up to $\frac r2$; and $(12,6)$, where $k=\frac r3$ but $W<0$, shows that the
		Claim does not replace the hypothesis. Finally, \eqref{eq:pellk} shows that $t=1$ never
		occurs, in agreement with~\cite{ZW}, and that $t=3$ and $t=4$ never occur inside the range.
	\end{rem}
	
	\subsection{Large vertex-connectivity}\label{sec:large}
	
	In this subsection we prove Theorem~\ref{thm:main}$(i)$. Throughout, $t$ is an integer with
	$\frac{r+2}{2}\le t\le r-1$, and we set
	\[
	a=r+1-t .
	\]
	Since $r-t$ is a threshold for $(r,t)$, as noted after~\eqref{eq:fiedler}, it suffices to
	construct, for each such pair, a connected $r$-regular graph $G$ with $\kappa(G)=t$ and
	$\lambda_2(G)=r-t$.
	
	We treat first the case in which $rt$ is even. Let $H$ be a $(2t-r-2)$-regular graph on $t$
	vertices and set
	\[
	E(r,t)=\bigl(K_{a}\cup K_{a}\bigr)\vee H ;
	\]
	see Figure~\ref{fig:large}. Such an $H$ exists: its degree satisfies $0\le2t-r-2\le t-1$ because
	$\frac{r+2}{2}\le t\le r-1$, and $(2t-r-2)t=2t^{2}-2t-rt$ is even exactly when $rt$ is
	even.
	
	\begin{prop}\label{prop:even}
		Let $rt$ be even. Then $E(r,t)$ is a connected $r$-regular graph on $2a+t$ vertices with
		$\kappa(E(r,t))=t$ and $\lambda_2(E(r,t))=r-t$, for every choice of $H$.
	\end{prop}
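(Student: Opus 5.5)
The plan is to verify regularity and connectivity by direct counting, and to compute $\lambda_2$ through the complement, using \eqref{eq:comp}.

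\emph{Regularity and order.} A vertex of either copy of $K_a$ has $a-1$ neighbours in its clique and $t$ in $H$, so its degree is $a-1+t=r$. A vertex of $H$ has $2t-r-2$ neighbours in $H$ and $2a=2r+2-2t$ in the two cliques, so its degree is again $r$. The order is $2a+t$, and the graph is connected because $t\ge1$ and every vertex of $H$ is joined to both cliques.

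\emph{Connectivity.} Deleting $V(H)$ separates the two cliques, so $\kappa\le t$. Conversely, let $X$ be a set of at most $t-1$ vertices. Then some vertex $h\in V(H)\setminus X$ survives. Every surviving clique vertex is adjacent to $h$, and every surviving vertex of $H$ is adjacent to every surviving clique vertex. Some clique vertex does survive, because $2a>t-1$, which follows from $a\ge2$ and $t\le r-1$. Hence $E(r,t)-X$ is connected, and $\kappa=t$. Note also that $|V|=2a+t>t$.

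\emph{Eigenvalue.} I will use \eqref{eq:comp} with $d=r$ and $n=2a+t$. The complement is $\overline{E(r,t)}=K_{a,a}\cup\overline H$, since the join becomes a disjoint union and the two cliques become the complete bipartite graph between them. This complement is $(n-1-r)$-regular, and $n-1-r=a$; each of its components is $a$-regular (indeed $\overline H$ has degree $t-1-(2t-r-2)=r+1-t=a$). Since $K_{a,a}$ is a bipartite component, $\lambda_{\min}(\overline{E})=-a$ by the standard fact recorded above. Therefore
\[
\lambda_2(E(r,t))=-1+a=r-t,
\]
and this holds independently of the choice of $H$.

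The only point needing care is the regularity of the complement components. This is immediate from the degree computation above, and the case $2t-r-2=0$, where $H$ is empty and $\overline H=K_t$, is covered by the same computation. Finally, since $r-t$ is already a threshold by \eqref{eq:fiedler}, the graph $E(r,t)$ shows that it is sharp.
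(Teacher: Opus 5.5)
Your regularity check and your computation of $\lambda_2$ through the complement $K_{a,a}\cup\overline H$ are correct and are the same as the paper's. The gap is in the lower bound $\kappa\ge t$. You claim that $2a>t-1$ follows from $a\ge2$ and $t\le r-1$, so that some clique vertex always survives. This is false. Since $2a=2r+2-2t$, the inequality $2a>t-1$ is equivalent to $3t<2r+3$, and $a\ge2$ only gives $2a\ge4$. For example, $(r,t)=(8,7)$ lies in the range, has $rt$ even, and gives $a=2$, so $2a=4<6=t-1$. Similarly, $(r,t)=(6,5)$ gives $2a=4=t-1$. Whenever $3t\ge2r+3$, the set $X$ can contain every clique vertex. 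Then $E(r,t)-X=H-X$, and whether this is connected depends on $H$. Your argument does not address this case.

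The missing step is the one the paper supplies, and it uses the same fact that drives your eigenvalue computation, namely that $\overline H$ is $a$-regular. Suppose $X$ contains all $2a$ clique vertices. Then $|X\cap V(H)|\le t-1-2a$, so $H-X$ has $n'\ge 2a+1$ vertices. Each of these vertices has at most $a$ non-neighbours in $H$. If $H-X$ were disconnected, a vertex in a smallest component would be non-adjacent to all vertices of the other components. There are at least $n'/2\ge a+\frac12>a$ such vertices, which contradicts the $a$-regularity of $\overline H$. With this case added, your proof is complete, and it holds for every choice of $H$.
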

	
	\begin{proof}
		A vertex of a clique has $a-1$ neighbors inside its clique and $t$ in $H$, so its degree
		is $a-1+t=r$; a vertex of $H$ has $2t-r-2$ neighbors inside $H$ and $2a$ outside, so its
		degree is $2t-r-2+2(r+1-t)=r$. Deleting $V(H)$ separates the two cliques, so
		$\kappa\le t$. Let $X$ be a set of at most $t-1$ vertices. Some vertex of $H$ survives,
		and every vertex of $H$ is adjacent to every vertex outside $V(H)$; so if some vertex
		outside $V(H)$ survives, then $E(r,t)-X$ is connected. Otherwise $X$ contains all $2a$
		vertices outside $V(H)$, and $H-X$ has $n'\ge t-(t-1-2a)=2a+1$ vertices. If $H-X$ were
		disconnected, a vertex in a smallest component would be adjacent in $\overline H$ to the
		at least $\frac{n'}{2}>a$ vertices of the other components, which is impossible since
		$\overline H$ is $a$-regular, as shown below. Hence $\kappa=t$.
		
		For the eigenvalue we pass to the complement. Since complementation turns a join into a
		disjoint union,
		\[
		\overline{E(r,t)}=\overline{K_{a}\cup K_{a}}\ \cup\ \overline H=K_{a,a}\ \cup\ \overline H ,
		\]
		and $\overline{E(r,t)}$ is $(n-1-r)$-regular with $n=2a+t$, so its degree is
		$2a+t-1-r=a$. Thus both pieces are $a$-regular: the piece $K_{a,a}$ visibly, and
		$\overline H$ because $t-1-(2t-r-2)=r-t+1=a$. Any $a$-regular graph satisfies
		$\lambda_{\min}\ge-a$, and the bipartite piece $K_{a,a}$ attains this value, so
		$\lambda_{\min}(\overline{E(r,t)})=-a$ irrespective of $H$. By~\eqref{eq:comp},
		\[
		\lambda_2\bigl(E(r,t)\bigr)=-1-(-a)=a-1=r-t .
		\]
	\end{proof}
	
	\begin{figure}[H]
		\centering
		\begin{tikzpicture}[scale=1.0]
			\node[lbl] at (-2.9,2.15) {$K_a$};
			\node[lbl] at ( 2.9,2.15) {$K_a$};
			\foreach \i/\y in {1/0.95,2/0.0,3/-0.95} \node[vtx] (a\i) at (-2.9,\y) {};
			\draw[ce] (a1)--(a2) (a2)--(a3); \draw[ce] (a1) to[bend right=45] (a3);
			\draw[blob] (-3.7,1.65) rectangle (-2.1,-1.65);
			\foreach \i/\y in {1/0.95,2/0.0,3/-0.95} \node[vtx] (b\i) at (2.9,\y) {};
			\draw[ce] (b1)--(b2) (b2)--(b3); \draw[ce] (b1) to[bend left=45] (b3);
			\draw[blob] (2.1,1.65) rectangle (3.7,-1.65);
			\foreach \i/\y in {1/1.5,2/0.9,3/0.3,4/-0.3,5/-0.9,6/-1.5} \node[cut] (s\i) at (0,\y) {};
			\draw[blob] (-1.05,2.0) rectangle (1.05,-2.0);
			\foreach \i/\j in {1/2,2/3,3/4,4/5,5/6}
			\draw[ce,black!65,line width=0.7pt] (s\i)--(s\j);
			\draw[ce,black!65,line width=0.7pt] (s1) to[bend left=75] (s6);
			\foreach \i in {a1,a2,a3,b1,b2,b3} \foreach \j in {s1,s2,s3,s4,s5,s6}
			\draw[se,opacity=0.4] (\i)--(\j);
			\node[lbl,align=center] at (0,-2.45)
			{$H$ is $(2t-r-2)$-regular and joined to everything outside itself};
			\node[lbl] at (0,-3.0) {$a=r+1-t$, \quad $\lambda_2=a-1=r-t$};
		\end{tikzpicture}
		\caption{The graph $E(r,t)=(K_a\cup K_a)\vee H$, drawn for
			$r=8$ and $t=6$, so that $a=3$ and $H$ is a $6$-cycle. Its complement is
			$K_{a,a}\cup\overline H$ with both pieces $a$-regular.}\label{fig:large}
	\end{figure}
	
	If $r$ and $t$ are both odd, then $2t-r-2$ is odd and $t$ is odd, so no
	$(2t-r-2)$-regular graph on $t$ vertices exists and the graph $E(r,t)$ is
	unavailable. This is not an artifact: the graph $E(r,t)$ would have $2a+t=2r+2-t$
	vertices, an odd number, while $r$ is odd, so no such regular graph exists. We therefore
	give the two sides consecutive orders $a$ and $a+1$. Only the smaller side can be
	complete: a vertex of the larger side is adjacent to all $t$ vertices of the cut, so it
	must have exactly $r-t=a-1$ neighbors inside a set of $a+1$ vertices, and a
	$(a-1)$-regular graph on $a+1$ vertices is the complement of a perfect matching.
	
	Accordingly, for $r$ and $t$ both odd, put $u=r-t$, which is even and positive, let
	\[
	H=\overline{C},\qquad C=\mathrm{circ}\Bigl(t;\,1,2,\dots,\tfrac{u+2}{2}\Bigr),
	\]
	so that $H$ is a $(2t-r-3)$-regular graph on $t$ vertices, let $\widehat H$ be the
	complement of a perfect matching on $u+2$ vertices, a $u$-regular graph, and set
	\[
	E^{*}(r,t)=\bigl(K_{a}\cup\widehat H\bigr)\vee H .
	\]
	
	\begin{prop}\label{prop:odd}
		Let $r$ and $t$ both be odd with $\frac{r+2}{2}\le t<r$. Then $E^{*}(r,t)$ is a connected
		$r$-regular graph on $2r+3-t$ vertices with $\kappa(E^{*}(r,t))=t$ and
		$\lambda_2(E^{*}(r,t))=r-t$.
	\end{prop}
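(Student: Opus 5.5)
We verify the three claims in turn: regularity and order, $\kappa=t$, and $\lambda_2=r-t$. As in the proof of Proposition~\ref{prop:even}, we work in the complement, via~\eqref{eq:comp}. Throughout, $a=r+1-t$ and $u=r-t=a-1$, so $\widehat H$ has $a+1$ vertices.

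\emph{Parameters.} First we check that $C$ is well defined. Its connection set is $\{\pm1,\dots,\pm\frac{u+2}{2}\}$. We have $u+2\le t$ because $t\ge\frac{r+2}{2}$, and $u+2=t$ would give $r=2t-2$, which is even. Hence $u+2<t$, and since $t$ is odd no connection is antipodal. So $C$ is $(u+2)=(a+1)$-regular, and $H=\overline C$ is $(t-1-(a+1))=(2t-r-3)$-regular. Degrees in $E^*(r,t)$ are then:
\begin{itemize}
\item a vertex of $K_a$ has $(a-1)+t=r$ neighbors;
\item a vertex of $\widehat H$ has $u+t=r$ neighbors;
\item a vertex of $H$ has $(2t-r-3)+a+(a+1)=r$ neighbors.
\end{itemize}
The order is $a+(a+1)+t=2r+3-t$. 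Connectedness is clear, since $H$ is joined to everything outside itself.

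\emph{Connectivity.} Deleting $V(H)$ separates $K_a$ from $\widehat H$, so $\kappa\le t$. Let $|X|\le t-1$. Some vertex of $H$ survives, and it is adjacent to every surviving vertex outside $V(H)$. So the only case to handle is when $X$ contains all $2a+1$ vertices outside $V(H)$. Then $H-X$ has $n'\ge 2a+2$ vertices. If $H-X$ were disconnected, a vertex in a smallest component would be $C$-adjacent to at least $n'/2\ge a+1$ vertices of the other components. Since $C$ is $(a+1)$-regular, this forces equality: $n'=2a+2$, and every vertex of $H-X$ has its whole $C$-neighborhood inside $V(H)\setminus X$. Thus $V(H)\setminus X$ is a union of components of $C$. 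But $C$ is connected (it contains the step $1$), so $V(H)\setminus X=V(H)$ and $t=2a+2$ is even, contradicting that $t$ is odd. Hence $\kappa=t$.

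\emph{Eigenvalue.} The complement is
\[
\overline{E^*(r,t)}=P\cup C,
\]
where $P=\overline{K_a\cup\widehat H}$ consists of $K_{a,a+1}$ together with a perfect matching on its larger side. Both pieces are $(a+1)$-regular. By~\eqref{eq:comp} it suffices to show that $\lambda_{\min}(P\cup C)=-a$.

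For $P$ we use the equitable partition into its two sides. Vectors summing to zero on the smaller side give eigenvalue $0$. Vectors summing to zero on the larger side give the eigenvalues $\pm1$ of the matching. The quotient matrix $\begin{pmatrix}0&a+1\\ a&1\end{pmatrix}$ has eigenvalues $a+1$ and $-a$. So $\lambda_{\min}(P)=-a$.

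For $C$ we must show $\lambda_{\min}(C)\ge-a$; this is the main obstacle. With $m=\frac{a+1}{2}$ and $\theta=\frac{2\pi k}{t}$, the eigenvalues of $C$ are
\[
\sum_{j=1}^m 2\cos(j\theta)=D_m(\theta)-1,\qquad D_m(\theta)=\frac{\sin\bigl((m+\tfrac12)\theta\bigr)}{\sin(\theta/2)}.
\]
So the goal is $D_m(\theta)\ge 1-a=2-2m$. Note $m\ge2$, because $u$ is even and positive, so $a\ge3$.

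\begin{itemize}
\item If $m=2$: the eigenvalue is $2c+2(2c^2-1)=4c^2+2c-2$ with $c=\cos\theta$. Its minimum is $-\frac94$, attained at $c=-\frac14$, and $-\frac94>-3=-a$.
\item If $m\ge3$: $D_m(\theta)$ can be negative only when $\pi/(2m+1)<\theta/2<\pi-\pi/(2m+1)$. There
\[
\sin(\theta/2)\ge\sin\frac{\pi}{2m+1}\ge\frac{2}{2m+1},
\]
so $D_m(\theta)\ge-\frac{2m+1}{2}\ge 2-2m$.
\end{itemize}
Hence $\lambda_{\min}(C)\ge-a$, so $\lambda_{\min}\bigl(\overline{E^*(r,t)}\bigr)=-a$, and by~\eqref{eq:comp} we get $\lambda_2\bigl(E^*(r,t)\bigr)=-1-(-a)=a-1=r-t$.
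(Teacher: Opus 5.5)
Your proof is correct, and it takes a different route from the paper in the spectral part and in the last step of the connectivity argument.

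\textbf{Spectrum.} The paper stays with $E^{*}(r,t)$ itself. It uses the equitable partition $\{V(K_a),V(H),V(\widehat H)\}$ and computes its $3\times3$ quotient, whose eigenvalues are $r$, $r-t$ and $t-r-3$. It then bounds the remaining eigenvalues part by part, using $\lambda_2(\widehat H)=0$ and $\lambda_2(H)=-1-\lambda_{\min}(C)\le u$.

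You instead complement the whole graph, exactly as in Proposition~\ref{prop:even}, and get $\overline{E^{*}(r,t)}=P\cup C$ with both pieces $(a+1)$-regular. Here $P$ plays the role $K_{a,a}$ played in the even case: its explicit spectrum $a+1,-a,0,\pm1$ supplies the value $-a$, and $C$ only has to stay above it. Both routes rest on the same analytic input, $\lambda_{\min}(C)\ge-a$. Yours makes the odd case a visible variant of the even one. The paper's exhibits $r-t$ directly as a quotient eigenvalue of $E^{*}(r,t)$.

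\textbf{The Dirichlet-kernel bound.} The two arguments are organized differently. With $s=m$, the paper splits on whether $\sin(\theta/2)\ge\frac{1}{2s-2}$ and uses an $\arcsin$ estimate that works uniformly for $s\ge2$. You first locate where $D_m$ can be negative and then apply $\sin x\ge 2x/\pi$. That estimate is too weak at $m=2$, which is why you need the separate computation $4c^2+2c-2\ge-\frac94$.

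\textbf{Connectivity.} You finish by parity: $t=2a+2$ would be even. The paper instead observes that $C$ would be $K_{u+2,u+2}$, which is impossible because $C$ contains a triangle. Your ending is shorter but needs $t$ odd, which the hypothesis provides. You also verify that the connection set of $C$ has $u+2$ distinct residues modulo $t$, which the paper leaves implicit.

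One compression to fix: equality only directly concerns a vertex in a smallest component. To get that every vertex of $H-X$ has its whole $C$-neighborhood in $V(H)\setminus X$, note that equality forces exactly two components, each of size $a+1$. So every vertex lies in a smallest component.
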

	
	\begin{proof}
		Degrees: a vertex of $K_a$ has $a-1+t=r$ neighbors; a vertex of $\widehat H$ has
		$u+t=r$; a vertex of $H$ has $(2t-r-3)+a+(u+2)=r$. The order is
		$a+t+(u+2)=2r+3-t$, which is even, as it must be. Deleting $V(H)$ separates
		$K_a$ from $\widehat H$, so $\kappa\le t$. Let $X$ be a set of at most $t-1$ vertices.
		As in Proposition~\ref{prop:even}, $E^{*}(r,t)-X$ is connected unless $X$ contains all
		$a+u+2=2u+3$ vertices outside $V(H)$, in which case $H-X$ has $n'\ge2u+4$ vertices.
		Suppose $H-X$ is disconnected. A vertex in a smallest component is adjacent in
		$C=\overline H$ to the at least $\frac{n'}{2}\ge u+2$ vertices of the other components,
		and $C$ is $(u+2)$-regular; so equality holds throughout, $C-X$ is $K_{u+2,u+2}$, and no
		edge of $C$ joins $V(H)\setminus X$ to $X$. As $C$ is connected, $X\cap V(H)=\emptyset$
		and $C=K_{u+2,u+2}$, which is impossible because $C$ contains the triangle on $0,1,2$.
		Hence $\kappa=t$.
		
		The partition $\{V(K_a),V(H),V(\widehat H)\}$ is equitable with quotient matrix
		\[
		Q=\begin{pmatrix}
			r-t & t & 0\\
			a & 2t-r-3 & u+2\\
			0 & t & u
		\end{pmatrix},
		\]
		and a computation gives
		$\det(Q-xI)=(r-t-x)\bigl[(2t-r-3-x)(r-t-x)-t(2r-2t+3)\bigr]$, whose roots are $r-t$, $r$
		and $t-r-3$. Hence $r-t$ is an eigenvalue of $E^{*}(r,t)$. The remaining eigenvalues come
		from inside the three parts, and they are the nontrivial eigenvalues of $K_a$, of $H$ and
		of $\widehat H$. The first are all $-1$. For $\widehat H$, its complement is a perfect
		matching, so $\widehat H$ is a complete multipartite graph with parts of size $2$ and
		$\lambda_2(\widehat H)=0\le r-t$. For $H$, its complement $C$ is $(u+2)$-regular, and we
		claim that $\lambda_{\min}(C)\ge-(u+1)$. Put $s=\frac{u+2}{2}\ge2$. The eigenvalues of $C$
		are $2\sum_{m=1}^{s}\cos(m\theta_j)=D(\theta_j)-1$ for $\theta_j=\frac{2\pi j}{t}$ and
		$0\le j\le t-1$, where
		\[
		D(\theta)=\frac{\sin\bigl((s+\frac12)\theta\bigr)}{\sin(\theta/2)}
		\]
		is the Dirichlet kernel. Since $2s+1$ is odd, $D(2\pi-\theta)=D(\theta)$, so it suffices
		to bound $D$ on $(0,\pi]$. If $\sin(\theta/2)\ge\frac{1}{2s-2}$, then
		$|D(\theta)|\le\frac{1}{\sin(\theta/2)}\le2s-2=u$. Otherwise
		$\theta<2\arcsin\frac{1}{2s-2}$, whence
		$(s+\frac12)\theta<(2s+1)\arcsin\frac{1}{2s-2}\le5\arcsin\frac12=\frac{5\pi}{6}<\pi$,
		the middle quantity being decreasing in $s$; then both $\sin((s+\frac12)\theta)$ and
		$\sin(\theta/2)$ are positive, so $D(\theta)>0$. In either case $D(\theta)\ge-u$, so
		$\lambda_{\min}(C)\ge-u-1$. By~\eqref{eq:comp},
		$\lambda_2(H)=-1-\lambda_{\min}(C)\le u=r-t$. Therefore
		$\lambda_2(E^{*}(r,t))=r-t$.
		
	\end{proof}
	
	\begin{proof}[Proof of Theorem~\ref{thm:main}$(i)$]
		Combine the threshold property of $r-t$ recorded after~\eqref{eq:fiedler} with
		Proposition~\ref{prop:even} when $rt$ is even and with Proposition~\ref{prop:odd} when
		$rt$ is odd.
	\end{proof}
	
	\section{Concluding Remarks}\label{sec:remark}
	
	Theorem~\ref{thm:main} gives a threshold for every pair $(r,t)$ with $t\ge2$ and determines
	exactly when it is sharp, together with all extremal graphs in the sharp cases of $(ii)$
	and $(iii)$. We mention two questions that remain.
	
	The main open problem is to determine the sharp threshold at the pairs $(r,t)$ with
	$2\le t<\frac{r+2}{2}$ for which the threshold of Theorem~\ref{thm:main} is not attained.
	At such a pair the budget $e(A,S)+e(B,S)\le rt$ cannot be divided as the equality analysis
	requires, and the two cases that are known show that this can fail in different ways. For
	$t=1$ the extremal graphs of Zhang and Wang~\cite{ZW} have sides obtained from $K_{r+1}$ by
	deleting a matching, so a side is not complete. For $t=2$ and odd $r\ge5$, in contrast, the
	extremal graph of Zhang~\cite{Z} still consists of two copies of $K_r$ and an independent
	cut; what fails there is the degree condition on the cut vertices, which for $t=2$ would
	require each of them to have $\frac r2$ neighbors in each side. Lemma~\ref{lem:key} is an identity in the average degrees and needs no
	completeness, and the only step of our reduction that uses completeness is the inequality
	$K\ge(\xi+1)(r-\xi)$, where the order of a side is tied to its average degree. Treating the
	orders of the two sides as integer parameters leads to a finer optimization problem, and we
	expect its answer to depend on the arithmetic of $(r,t)$ in a more intricate way than in
	Theorem~\ref{thm:main}.
	
	The second question concerns the transition between $(ii)$ and $(iii)$.
	Remark~\ref{rem:asym} locates the largest $t$ with $W(r,t)>0$ at $\tau^{*}r+c^{*}+O(r^{-1})$.
	In our computations for $r\le400$, the sign of $W(r,t)$ changes at most once as $t$
	increases through $2\le t<\frac{r+2}{2}$: for $r\in\{3,4,6,8\}$ it is positive throughout,
	and for every other $r\le400$ it changes exactly once, from positive to negative. In the
	latter case, $\lfloor\tau^{*}r+c^{*}\rfloor$ gives the largest $t$ with $W(r,t)>0$ except
	when $r=77$. It would be interesting to prove that the sign changes at most once for every
	$r$ and to describe the transition point exactly.
	
	\section*{Acknowledgments}
	
	The authors thank Jianfeng Wang and Wenqian Zhang for helpful discussions, and
	Wenqian Zhang for drawing their attention to~\cite{Z}.
	
	\section*{Declaration of generative AI and AI-assisted technologies in the writing process}
	
	During the preparation of this work, the authors used Claude (Anthropic) for language
	refinement, technical editing, and computational verification of examples. The authors
	reviewed and edited the output as needed and take full responsibility for the content of
	the published article.

\end{document}